\documentclass[12pt,a4paper,reqno]{amsart}

\usepackage{amsmath,amssymb}
\usepackage{amsthm}
\usepackage{enumitem}
\usepackage[toc]{appendix}
\usepackage{multicol}
\usepackage{comment}
\usepackage{hyperref}
\usepackage{url}
\usepackage{graphicx}
\usepackage{mathtools}
\usepackage[capitalize]{cleveref}

\allowdisplaybreaks
\newcommand{\ignore}[1]{}

\theoremstyle{definition}
\newtheorem{definition}{Definition}

\newtheorem{remark}[definition]{Remark}
\newtheorem{notation}[definition]{Notation}

\theoremstyle{plain}
\newtheorem{proposition}[definition]{Proposition}
\newtheorem{lemma}[definition]{Lemma}
\newtheorem{theorem}[definition]{Theorem}
\newtheorem{conjecture}{Conjecture}

\newtheorem{corollary}[definition]{Corollary}
\newcommand*{\claimproofname}{Proof}

\newcommand{\F}{\mathbb F}
\newcommand{\cL}{\mathcal L}
\newcommand{\C}{\mathcal C}
\newcommand{\ZZ}{\mathbb Z}
\newcommand{\cI}{\mathcal{I}}
\newcommand{\cB}{\mathcal{B}}

\newcommand{\cM}{\mathcal M}
\newcommand{\cO}{\mathcal O}
\newcommand{\cZ}{\mathcal Z}
\newcommand{\cV}{\mathcal V}
\newcommand{\cH}{\mathcal H}
\newcommand{\NN}{\mathbb N}
\newcommand{\bcM}{\mathbf{\cM}}
\newcommand{\cP}{\mathcal P}
\newcommand{\cX}{\mathcal{X}}
\newcommand{\cS}{\mathcal{S}}

\DeclareMathOperator{\rowsp}{rowsp}
\DeclareMathOperator{\rk}{rk}
\DeclareMathOperator{\cl}{cl}

\newcommand{\qbinom}[2]{\genfrac[]{0pt}{0}{#1}{#2}}

\usepackage{xcolor}

\title{Counting $q$-matroids}
\author{Benjamin Jany}\thanks{\texttt{b.jany@tue.nl},  Department of Mathematics and Computer Science, Eindhoven University of Technology, The Netherlands}
\author{
Relinde Jurrius}\thanks{\texttt{RPMJ.Jurrius@mindef.nl},Faculty of Military Sciences, Netherlands Defence Academy, The Netherlands}
\author{
Rudi Pendavingh}\thanks{\texttt{r.a.pendavingh@tue.nl},  Department of Mathematics and Computer Science, Eindhoven University of Technology, The Netherlands}
\date{}

\begin{document}

\begin{abstract}
$q$-Matroids, a $q$-analogue of classical matroids have attracted a lot of attention over the last decade, yet their enumeration remains largely unexplored. In this paper, we study the number of $q$-matroids, paving and sparse-paving $q$-matroids defined on a fixed ground space and with prescribed rank. We derive new lower bounds using constructions from constant-dimension codes and improve existing estimates. On the upper bound side, we develop two approaches: a combinatorial method based on controlling the number of dependent hyperplanes for paving $q$-matroids, and an entropy-based counting argument applicable to classes of $q$-matroids closed under contraction. These techniques yield explicit upper bounds on the logarithmic number of $q$-matroids with fixed rank and ground space. Finally, we analyze the asymptotic behavior of these bounds, and identify gaps between lower and upper estimates, leading to conjectures on the true asymptotic growth.
\end{abstract}

\maketitle

\section{Introduction}
$q$-Matroids, a $q$-analogue of matroids, first introduced in \cite{crap}, were rediscovered in \cite{JP2018} establishing a connection with rank-metric codes. A $q$-matroid can be defined via a submodular, non-decreasing and bounded rank function function on the collection of subspaces of a vector space. 

Since their reintroduction, the theory of $q$-matroids has blossomed, both because of its connection to coding theory, but also because of its interesting similarities and contrasts with classical matroid theory. 
Much of recent work on $q$-matroids has focused on cryptomorphisms \cite{AB24,BCJ22}, combinatorial structure \cite{CJ24,ConcaJanyRavagnani2026,GJ22, GJ24decomp,GluesingLuerssenJany2023}, and the relation to rank-metric code and/or finite geometry \cite{AJNZ24, BIJS23,GJ25Rep,Jany2023Projectivization}. 

Despite much progress in the topics mentioned previously, there has been limited work on the enumeration of $q$-matroids. Understanding the number of $q$-matroids is a fundamental step toward characterizing their typical structure, much as in classical matroid theory, where enumeration has played a key role in revealing that most matroids exhibit highly non-representable and extremal behavior.
In \cite{Degen24}, the authors focus on the density of representable $q$-matroids (i.e. $q$-matroids that arise from $\F_{q^m}$-linear rank-metric codes). They more precisely show that almost all $q$-matroids are not representable. 
  
In this paper, we focus on a different enumeration problem: what is the number of $q$-matroids over a fixed ground space and of given rank? In order to give an asymptotic answer to this question, we first consider the following, more restrictive, problem: what is the number of paving $q$-matroids of given rank and with fixed ground space?
Both problems are $q$-analogues of enumeration in matroid theory which have been intensively studied  \cite{BPvdP2014,Knuth1974,KwanSahSawhney2024, PvdP2015, PvdP2016,PvdP2017}.
We establish upper bounds on the number of paving $q$-matroids and $q$-matroids with fixed ground space and fixed rank. 
We then consider asymptotic behaviors of those bounds, for example when the field size goes to infinity, when the dimension of the ground space goes to infinity and the rank is half on the dimension of the ground space and the former goes to infinity. 

Similar techniques as for classical matroids are used, which leads us to make a rather direct $q$-analogue of the basic counting results from the classical case \cite{BPvdP2014,PvdP2017}. We expect more subtle counting results, leading to better upper- and lower bounds \cite{PvdP2016,HPvdP2022}, to have a direct $q$-analogue as well. We formulate two conjectures about the asymptotic count.

The article is structured as follows: in Section \ref{sec:preliminaries} we set notation and recall definitions related to $q$-matroids; in Section \ref{sec:lowerbound} we consider lower bound on the number of $q$-matroids and slightly improve on existent bounds; in Section \ref{sec:upperbpunds} we establish an upper bound on the number of paving $q$-matroid by bounding the number of dependent hyperplanes, and an upper bound on the number of $q$-matroids using an entropy counting method; in Section \ref{sec:asymptoticcomparisson} we consider asymptotic counts and give a count for the number of $q$-matroids with fixed ground space and fixed rank over large finite fields.

\medskip

\section{Preliminaries}\label{sec:preliminaries}

\subsection{Fields, spaces and subspaces}
\begin{notation} We write $\F_q$ for the {\em Galois field} (or {\em finite field}) of cardinality $q$.
\end{notation}

\begin{notation}
    If $V$ is a vector space, then $U\leq V$ denotes that $U$ is a linear subspace of $V$. We write
    $$\cL(V):=\{U: U\leq V\}$$
    for the set of all linear subspaces of $V$, and 
    $$\begin{bmatrix} V \\ k \end{bmatrix}_q:=\{U: U\leq V, \dim(U)=k\}$$
    for the set of subspaces of dimension $k$.
\end{notation}


For $n \in \NN$ and a prime power $q$, let $$[n]_q = \frac{q^n-1}{q-1}\text{ and }[n]_q! = [n]_q [n-1]_q \cdots [2]_q[1]_q.$$ The \emph{Gaussian binomial coefficients \(\bigl[\!\begin{smallmatrix} n \\ k \end{smallmatrix}\!\bigr]_q\)}, for \(n,k \in \mathbb{N}\setminus \{0\}\), \(n \geq k\) and prime power \(q \geq 2\), are defined as:
\[\begin{bmatrix} n \\ k \end{bmatrix}_q:= \frac{[n]_q!}{[k]_q! [n-k]_q!}=\frac{\left(q^n-1\right) \cdots\left(q^{n-k+1}-1\right)}{\left(q^k-1\right)\cdots(q-1)}.
\]
Furthermore, if \(k=0\) then \(\bigl[\!\begin{smallmatrix} n \\ k \end{smallmatrix}\!\bigr]_q =1\), and   \(\bigl[\!\begin{smallmatrix} n \\ k \end{smallmatrix}\!\bigr]_q = 0\) if \(k<0\) or \(n<k\). Note also that \(\bigl[\!\begin{smallmatrix} n \\ 1 \end{smallmatrix}\!\bigr]_q = [n]_q\) for all $n \in \NN$. 

It is well known that if $V$ is any vector space over $\F_q$ of dimension $n$, then 
$$\left|\qbinom{V}{k}\right|= \qbinom{n}{k}_q.$$

\subsection{$q$-Matroids}

\begin{definition}
    A $q$-matroid is a pair $M = (E , \rho)$, where $E$ is a vector space, and $\rho : \cL(E) \rightarrow \ZZ_{\geq 0}$, so that for all $V, W \in \cL(E)$ 
    \begin{itemize}
        \item[(R1)] $0 \leq \rho(V) \leq \dim V$,
        \item[(R2)] if $V \leq W$ then $\rho(V) \leq \rho(W)$,
        \item[(R3)] $\rho(V+W) + \rho(V \cap W) \leq \rho(V) +\rho(W)$.
    \end{itemize}
    The {\em ground space} of $M$ is $E(M):=E$, and the {\em rank function} of $M$ is $\rho_M:=\rho$. The {\em size} of $M$ is $\dim(E)$ and the {\em rank} of $M$ is $\rho(M) :=\rho(E)$ (also often denoted by $r$).
\end{definition}

Similarly to classical matroids, one can define several classes of spaces within a $q$-matroid. We recall some of them here.
Let $M = (E, \rho)$ be a $q$-matroid. A subspace $V \leq E$ is \textbf{independent} if $\rho(V)  = \dim(V)$, otherwise $V$ is \textbf{dependent}. $V$ is a \textbf{basis} if $V$ is independent and $\dim(V)=\rho(E)$. Furthermore, $V$ is a \textbf{circuit} if it is dependent and if any proper subspace of $V$ is independent.  Moreover, $V$ is an \textbf{open space} if it is the $\{0\}$-space or a sum of circuits. 

The \textbf{closure} of $V$ is defined as $\cl(V) := \{w \in E \, : \, \rho(V + \langle w \rangle) = \rho(V)\}$ and $V$ a \textbf{flat} if $\cl(V) = V$. Additionally, a flat $V$ is called a \textbf{hyperplane} if $\rho(V) = \rho(E)-1$. Finally, $V$ is a \textbf{cyclic flat} if $V$ is both an open space and a flat. 

\begin{notation}
    Let $M = (E, \rho)$ be a $q$-matroid. Then:
    \begin{itemize}
        \item $\cI_{M} := \{I \leq E \, : \, I \text{ is independent}\}$, 
        \item $\cB_{M} := \{B \leq E  \, : \, B \text{ is a basis}\}$,

        \item $\cH_{M} := \{H \leq E \, : H \text{ is a hyperplane}\},$
        \item $\cZ_{M} := \{Z \leq E  \, : \, Z \text{ is a cyclic flat}\}$.
    \end{itemize}
    The subscript is omitted if the $q$-matroid $M$ is clear from the context.
\end{notation}
Each $q$-matroid $M$ is uniquely determined by its ground space $E(M)$ and either its collection of independent sets $\cI_{M}$, or its bases $\cB_M$, or its hyperplanes $\cH_M$ \cite{AB24, BCJ22}.

\begin{definition}
    A $q$-matroid $\cM$ of rank $r$ is \emph{paving} if every circuit has dimension at least $r$. Furthermore a $q$-matroid is \emph{sparse-paving} if it is paving and any two circuits of dimension $r$ intersect in a subspace of dimension at most $r-2$.
\end{definition}

The notion of paving $q$-matroid is analogous to the notion of paving matroid for classical matroids. 

It was shown in \cite{PvdP2016, HPvdP2022} that the logarithmic number of (sparse) paving matroids and the logarithmic number of matroids have the same asymptotic rate. Also, upper bounds on the number of paving matroids are a stepping stone to bounds on the number of general matroids. Paving $q$-matroids will play a similar role in this paper.

\begin{definition} Let $M=(E, \rho)$ be a $q$-matroid and let $\phi:E\rightarrow F$ be a vector space isomorphism. Then $\phi(M):= (F, \rho\circ(\phi^{-1}))$. We say that two $q$-matroids $M, M'$ are {\em isomorphic} if there is a vector space isomorphism $\phi: E(M)\rightarrow E(M')$ so that $M'=\phi(M)$.
\end{definition}

When counting the number of $q$-matroids, we will consider $M$ and $M'$ as distinct when $M\neq M'$, even if $M$ and $M'$ are isomorphic. To then avoid overcounting the number of $q$-matroids of a given size and rank, we to restrict our attention to $q$-matroids with a fixed ground space of the form $\F_q^n$ for some $n$. Hence the following notational choices.

\begin{notation}\label{not:numberqmat}
Let $\cM_q$, $\cP_q$, $\cS_q$ denote respectively the classes of $q$-matroids, paving $q$-matroids and sparse-paving $q$-matroids.
Furthermore for $\cX \in \{\cM, \cP, \cS\}$ and $x \in \{m,p,s\}$ (where $\cX$ and $x$ are the same letter with different font style) let 
$$\cX_q(n) := \{M \, \in \cX_q : \, E(M) = \F_q^n\} \quad \text{and} \quad x_q(n) := |\cX_q(n)|,$$ 
 $$\cX_q(r,n) := \{ M \in \cX_q(n)\, : \rho(M) = r\} \quad \text{and} \quad x_q(r,n) := |\cX_q(r,n)|$$

More precisely, $\cX_q$ corresponds to a class of $q$-matroids and $x_q$ to the number of $q$-matroids in that class.

  \end{notation}
As $\cS_q\subseteq \cP_q\subseteq \cM_q$, we have
$$s_q(r,n)\leq p_q(r,n)\leq m_q(r,n)\qquad\text{for all }0\leq r\leq n.$$

To finish the section, we include the definition of restriction and contraction for $q$-matroids, first introduced in \cite{JP2018}. 

\begin{definition}
Let $M=(E,\rho)$ be a $q$-matroid and let $T\leq E$. For every subspace $A\subseteq T$, define $\rho_{M|T}(A)=\rho(A)$. Then $M|T:=(T,\rho_{M|T})$ is called the restriction of $M$ to $T$.
\end{definition}

\begin{definition}
Let $M=(E,\rho)$ be a $q$-matroid and let $T\leq E$. We define the map $\rho_{M/T}:\mathcal{L}(E/T)\to\mathbb{Z}$ via 
$$\rho_{M/T}(A/T)=\rho(A)-\rho(T)\qquad\text{ for all }T\leq A\leq  E.$$ 
Then $M/T:=(E/T,\rho_{M/T})$ is called the contraction of $M$ by $T$.
\end{definition}

It is well-known  that $M|T$ and $M/T$ are $q$-matroids.


\section{Lower bound}\label{sec:lowerbound}
A first lower bound on the number of (sparse paving) $q$-matroids with ground space $\F_q^n$ was established in \cite{Degen24}. It is based on a construction of \emph{constant dimension codes} (CDC). We present it here for completeness. 

\begin{definition}
    A non-empty subset $\C \subseteq \cL(\F_q^n)$ is a called a subspace code. Its minimum subspace distance is given by 
    $$d_S(\C) := \min \{ d_S(V,W) \, : \, V, W \in \C, V \neq W\},$$
    where $d_S(V,W) := \dim V + \dim W - 2 \dim( V \cap W)$. Furthermore $\C$ is called a constant dimension code (CDC) of dimension $k$ if $\dim V = k$ for all $V \in \C$. Let $A_q(n,d,k)$ be the maximal cardinality of a CDC $\C \subseteq \cL(\F_q^n)$ of dimension $k$ and minimum distance $d_S(\C) \geq d$. 
\end{definition}

A lower bound on $A_q(n,d,k)$ was established in \cite[Section IV]{Silva08}. The bound arises via the lifting construction of a CDC. First, recall the rank distance is defined as $d_R: \F_q^{n \times m} \times \F_q^{n \times m}$ where $d_R(X,Y) = \rk(X-Y)$. Given $\C \subseteq \F_q^{n \times m}$, its minimum rank distance is $$d_R(\C) := \min \{d_R(X,Y) \, : \, X, Y \in \C, X \neq Y \}.$$

Let $\cI : \F_q^{k \times (n-k)} \rightarrow \cL(\F_q^{n })$ where $X \mapsto \rowsp( [I_k \, | \, X] )$. The subspace $\cI(X)$ is called the lifting of $X$. Furthermore if $\C \subseteq \F_q^{k \times (n-k)}$ then $\cI(\C) := \{\cI(X) \, : \, X \in \C\}$ is called the lifting of $\C$. 

It is shown in \cite[Prop. 4]{Silva08} that for all $X, Y \in \F_q^{k \times (n-k)}$ we have $d_S(\cI(X), \cI(Y)) = 2d_R(X, Y)$. Furthermore, for $\C \subseteq \F_q^{k \times (n-k)}$, we have $d_S(\cI(\C)) = 2d_R(\C)$. Note that $\cI(\C)$ is a CDC of cardinality $|\C|$.
Hence from the rank-metric Singleton bound which states that $|\C| \leq q^{\max \{k,(n-k)\}(\min \{k, (n-k)\} -d+1)}$ for $\C \subseteq \F_q^{n\times m}$ and the fact that \emph{MRD codes} (codes that achieve the Singleton bound) exist for all choice of parameters $n,k,q,d$  (see \cite{Gabidulin1985} for a construction), we get the existence of a CDC with parameters $n,2d,k,q$ of cardinality $q^{(n-k)(k-d+1)}$. In the following statement, we also include an upper bound \cite{Kotter08} for  $A_q(n,2d,k)$. 

\begin{proposition}\label{prop:boundAq}\cite{Kotter08} 
    For $2k \leq n$ and $d \geq 2$, we have
    $$ q^{(n-k)(k - d +1)} \leq  A_q(n,2d,k) < 4q^{(n-k)(k-d+1)}$$
\end{proposition}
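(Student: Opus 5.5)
The lower bound follows from the lifting construction described just before the statement. Since $2k\le n$, we have $k\le n-k$. By Gabidulin's construction \cite{Gabidulin1985} there is an MRD code $\C\subseteq\F_q^{k\times(n-k)}$ with minimum rank distance $d$. Its size meets the rank-metric Singleton bound, namely $|\C|=q^{(n-k)(k-d+1)}$. By \cite[Prop. 4]{Silva08}, the lifting $\cI(\C)$ is a CDC of dimension $k$ with $d_S(\cI(\C))=2d_R(\C)\ge 2d$. It has cardinality $|\C|$, because $\cI$ is injective: the reduced form $[I_k\,|\,X]$ determines $X$. Hence $A_q(n,2d,k)\ge q^{(n-k)(k-d+1)}$.

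For the upper bound I would use a packing argument in the Grassmannian, following \cite{Kotter08}. Let $\C$ be a CDC of dimension $k$ with $d_S(\C)\ge 2d$. Any two distinct members then satisfy $\dim(V\cap W)\le k-d$. Therefore every $(k-d+1)$-dimensional subspace of $\F_q^n$ lies in at most one codeword. Each codeword contains $\qbinom{k}{k-d+1}_q$ such subspaces, so double counting gives
$$|\C|\le \frac{\qbinom{n}{k-d+1}_q}{\qbinom{k}{k-d+1}_q}.$$

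It remains to estimate this ratio by $4q^{(n-k)(k-d+1)}$. Set $t=k-d+1$. Write each Gaussian binomial as $q^{t(n-t)}$ (respectively $q^{t(k-t)}$) times a product of factors $\prod(1-q^{-i})$. The powers of $q$ give exactly $q^{t(n-k)}$. The remaining correction factor is
$$\frac{\prod_{i=n-t+1}^{n}(1-q^{-i})\,\prod_{i=1}^{t}(1-q^{-i})\cdot\prod_{i=1}^{k-t}(1-q^{-i})}{\prod_{i=1}^{n-t}(1-q^{-i})\cdots}.$$
Simplifying, it reduces to a ratio of finite partial products of $\prod_{i\ge1}(1-q^{-i})$. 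The numerator is at most $1$, and the denominator is at least $\prod_{i\ge1}(1-q^{-i})$. For $q\ge2$ this infinite product exceeds $0.288>1/4$, which gives the constant $4$. The cleanest route is the standard bound $q^{t(n-t)}\le\qbinom{n}{t}_q<\gamma_q^{-1}q^{t(n-t)}$ with $\gamma_q=\prod_{i\ge1}(1-q^{-i})$, together with the lower bound $\qbinom{k}{t}_q\ge q^{t(k-t)}$.

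The main obstacle is this constant bookkeeping: checking that $\gamma_2^{-1}<4$ and that no factor is lost. Strictness of the final inequality comes from the strict inequality $\gamma_q^{-1}<4$. The hypothesis $d\ge2$ ensures $t\le k-1$, so the packing is nontrivial; this is where the hypothesis is used.
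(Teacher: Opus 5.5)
Your proof is correct. The lower bound is exactly the lifting argument the paper sketches before the statement. For the upper bound, the paper gives no proof of its own and only cites K\"otter--Kschischang. There the estimate comes from their Singleton-type bound $A_q(n,2d,k)\le\qbinom{n-d+1}{\max\{k,n-k\}}_q=\qbinom{n-d+1}{k-d+1}_q$, obtained by puncturing the ambient space $d-1$ times, combined with $\qbinom{a}{b}_q<4q^{b(a-b)}$. You replace puncturing by the anticode (packing) bound. This is an elementary double count and is never weaker. With $t=k-d+1$, your bound equals $\prod_{m=d}^{k}(q^{n-k+m}-1)/(q^m-1)$, the Singleton-type bound equals $\prod_{m=1}^{t}(q^{n-k+m}-1)/(q^m-1)$, and these factors decrease in $m$. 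The constant then comes from the same fact as in the cited route, $\prod_{i\ge1}(1-q^{-i})>1/4$.

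Some minor points. Your displayed ``correction factor'' is garbled and should be replaced by
\[
\frac{\qbinom{n}{t}_q}{\qbinom{k}{t}_q}=\prod_{i=0}^{t-1}\frac{q^{n-i}-1}{q^{k-i}-1}=q^{t(n-k)}\,\frac{\prod_{j=n-t+1}^{n}(1-q^{-j})}{\prod_{j=d}^{k}(1-q^{-j})}.
\]
Contrary to your last sentence, the hypothesis $d\ge2$ is not used by the argument. In this formula it only improves the constant to below $2$, because the denominator then omits the factor $1-q^{-1}$. What you do tacitly use is $d\le k$, both for the Gabidulin code to exist and for $t\ge1$. The statement silently assumes this as well. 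For $d\ge k+2$ its upper bound can fail: $q=2$, $k=1$, $d=3$, $n=3$ gives $A_2(3,6,1)=1=4q^{(n-k)(k-d+1)}$.
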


Now note that it is possible to derive a $q$-matroid from a CDC of minimum distance at least $4$. 

\begin{proposition}\cite{GJ22}
    Let $\C \subseteq \cL(\F_q^n)$ a CDC of dimension $k$ such that $\dim (V \cap W) \leq k-2$ for all $V, W \in \C$ (i.e. $d_S(\C) \geq 4$). Define 

    $$\rho: \cL(\F_q^n) \rightarrow \ZZ_{\geq 0}, \, \, V \mapsto \begin{cases}
        k-1 & \text{if }V \in \C \\
        \min\{ \dim V, k\} & \text{otherwise}.
    \end{cases}$$
Then $(\F_q^n, \rho)$ is a sparse paving $q$-matroid of rank $k$, whose circuits of rank $k-1$ are the elements of $\C$.
\end{proposition}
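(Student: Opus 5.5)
The approach is to verify the axioms (R1)--(R3) for $\rho$ directly, and then to read off the circuits and check the paving conditions.

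Throughout, write $f(V)=\min\{\dim V,k\}$. Then $\rho=f-\mathbf 1_{\C}$, since on each $V\in\C$ we have $f(V)=k$ and $\rho(V)=k-1$.

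\emph{(R1) and (R2).} (R1) is immediate. For (R2), let $V\le W$. Since $f$ is non-decreasing, the only case that needs thought is $W\in\C$ and $V\notin\C$. If $V=W$ there is nothing to prove. Otherwise $\dim V<k$, so $\rho(V)=\dim V\le k-1=\rho(W)$.

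\emph{(R3).} This is the main step. The function $f$ is the rank function of the uniform $q$-matroid, so it is submodular. Subtracting the indicator of $\C$ can only spoil the inequality when $V+W\in\C$ or $V\cap W\in\C$ while the corresponding terms on the right are not lowered. I will treat the cases by how many of $V,W,V+W,V\cap W$ lie in $\C$.

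\begin{itemize}
\item If $V\cap W\in\C$, then $V\cap W$ has dimension $k$ and is contained in both $V$ and $W$. So each of $V,W$ either equals $V\cap W$ or has dimension $>k$. If one of them equals $V\cap W$, the inequality is trivial. Otherwise both have $\rho=k$, and $\rho(V+W)+\rho(V\cap W)\le k+(k-1)<2k$.
\item If $V+W\in\C$, then $\dim(V+W)=k$. If $V$ or $W$ equals $V+W$, the inequality is trivial. So assume $V,W$ are proper subspaces of $V+W$. Then $\dim V,\dim W<k$, hence $V,W\notin\C$, and
\[\rho(V)+\rho(W)=\dim V+\dim W=k+\dim(V\cap W)\ge \rho(V+W)+1+\rho(V\cap W),\]
which is strict.
\item If neither $V+W$ nor $V\cap W$ lies in $\C$, we need $f(V+W)+f(V\cap W)\le f(V)+f(W)-\mathbf 1_\C(V)-\mathbf 1_\C(W)$. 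Suppose $V\in\C$. If $W\subseteq V$ or $V\subseteq W$, the inequality is trivial. If $W\supsetneq$ neither, then $\dim(V+W)>k$, so $f(V+W)=k$. We need $k+f(V\cap W)\le k-1+f(W)-\mathbf 1_\C(W)$.
\begin{itemize}
\item If $\dim W\ge k+1$, this reads $f(V\cap W)\le k-1$, which holds since $V\cap W\subsetneq V$.
\item If $\dim W\le k$ and $W\notin\C$, we need $\dim(V\cap W)\le\dim W-1$, which holds since $W\not\subseteq V$.
\item If $W\in\C$ too, we need $\dim(V\cap W)\le k-2$. This is exactly where the hypothesis $d_S(\C)\ge4$ enters.
\end{itemize}
\end{itemize}

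\emph{Circuits and paving.} Every space of dimension $<k$ is independent. A $k$-space is dependent iff it lies in $\C$, and any such space is a circuit since its proper subspaces have dimension $<k$. Every $(k+1)$-space is dependent, so all circuits have dimension $\ge k$. Hence the $q$-matroid is paving, its $k$-dimensional circuits are exactly $\C$, and the sparse condition is precisely the hypothesis on $\C$. The rank is $\rho(\F_q^n)=k$, which requires $n>k$ or $\C$ empty.

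The only delicate point is the case $V,W\in\C$ in (R3), which the distance condition resolves.
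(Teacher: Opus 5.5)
Your proof is correct. The paper does not prove this proposition itself; it quotes it from \cite{GJ22}. So there is no in-paper argument to compare against, and your direct check of (R1)--(R3) stands as a self-contained proof.

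The case split for (R3) is complete:
\begin{itemize}
\item If $V\cap W\in\C$ or $V+W\in\C$, you compute directly.
\item Otherwise the inequality is submodularity of $f=\min\{\dim(\cdot),k\}$, unless $V$ or $W$ lies in $\C$.
\item In that last situation, the only non-trivial subcase is $V,W\in\C$ with $V\neq W$, and this is exactly the hypothesis $\dim(V\cap W)\le k-2$.
\end{itemize}
Your identification of the circuits and of the (sparse) paving property is also right. Your remark that rank $k$ requires $n>k$ or $\C=\emptyset$ is a genuine edge case that the statement glosses over.

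A few small things to tidy up:
\begin{itemize}
\item The heuristic sentence at the start of (R3) is backwards. Subtracting $\mathbf 1_{\C}$ at $V+W$ or $V\cap W$ lowers the left-hand side, so it can only help. The inequality can fail only when $V$ or $W$ lies in $\C$ without a matching drop on the left, which is what your third case actually treats.
\item In that third case, say explicitly that when neither $V$ nor $W$ is in $\C$ the claim is just submodularity of $f$.
\item (R1) needs $k\ge 1$: for $k=0$, $\C=\{0\}$ would give $\rho(0)=-1$.
\item The statement speaks of circuits of \emph{rank} $k-1$, whereas you identify circuits of \emph{dimension} $k$. These agree because every circuit $C$ has $\rho(C)=\dim C-1$, since its codimension-one subspaces are independent. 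Hence the $(k+1)$-dimensional circuits have rank $k$ and are excluded.
\item In (R2), the clause ``If $V=W$ there is nothing to prove'' is vacuous in that case, since $W\in\C$ and $V\notin\C$.
\end{itemize}
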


Furthermore, note that a CDC of dimension $k$ and minimum distance at least $4$ is an independent set of the Grassmann graph $J_q(n,k)$, therefore $A_q(n,4,k)$ is the independence number of $J_q(n,k)$. Recall, \emph{Grassmann graph} (or \emph{q-Johnson graph) \(J_q(n,k)\)} has as vertices the \(k\)-subspaces of $\F_q^n$,
where two vertices are adjacent if the corresponding \(k\)-subspaces intersect in a \((k-1)\)-space. 
Using the previously stated facts, one gets the following lower bound on the number of $q$-matroids of rank $r$ over $\F_q^n$, which is in fact a lower bound on the number of paving $q$-matroids of rank $r$ over $\F_q^n$. 

\begin{theorem}\cite[Thm. 3.5]{Degen24}\label{thm:lowerboundrank}
    Let $n \geq 4$ and $2 \leq r \leq \lfloor \frac{n}{2} \rfloor$. Then 

    \begin{equation}\label{eq:lowerboundrank}
    2^{q^{(n-r)(r-1)}} \leq s_q(r,n). 
\end{equation}
\end{theorem}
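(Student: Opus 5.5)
The plan is the standard "subsets of a good code" argument. By Proposition~\ref{prop:boundAq} with $k=r$ and $d=2$ (valid because $2r\le n$), there is a constant dimension code $\C\subseteq\cL(\F_q^n)$ of dimension $r$ with $d_S(\C)\ge 4$ and
\[|\C|\ge q^{(n-r)(r-1)}.\]
Concretely, $\C$ is the lifting $\cI(\mathcal{G})$ of an MRD code $\mathcal{G}\subseteq\F_q^{r\times(n-r)}$ with minimum rank distance $2$.

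Every nonempty subset $\C'\subseteq\C$ is again a constant dimension code of dimension $r$. It still satisfies $\dim(V\cap W)\le r-2$ for distinct $V,W\in\C'$, since the minimum distance can only grow when we pass to a subset. So the proposition from \cite{GJ22} applies to $\C'$ and gives a sparse paving $q$-matroid $M_{\C'}=(\F_q^n,\rho_{\C'})$ of rank $r$. For the empty subset we take $\rho(V)=\min\{\dim V,r\}$, which is the uniform $q$-matroid and is trivially sparse paving of rank $r$.

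Next I check that $\C'\mapsto M_{\C'}$ is injective. The proposition states that the circuits of dimension $r$ (those of rank $r-1$) of $M_{\C'}$ are exactly the elements of $\C'$. One can also see this directly: the $r$-dimensional spaces $V$ with $\rho_{\C'}(V)=r-1<\dim V$ are precisely the members of $\C'$. Hence $\C'$ is recovered from $M_{\C'}$, and distinct subsets give distinct rank functions on the fixed ground space $\F_q^n$.

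This yields $s_q(r,n)\ge 2^{|\C|}\ge 2^{q^{(n-r)(r-1)}}$, which is \eqref{eq:lowerboundrank}.

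The only real work is the existence of the code, which we take from Proposition~\ref{prop:boundAq}, and the hypothesis $r\ge 2$, which makes $d=2\le r$ admissible. The remaining step is bookkeeping: making sure the hypotheses $n\ge4$ and $2\le r\le\lfloor n/2\rfloor$ match those of the cited results, in particular that $2k\le n$ holds for $k=r$.
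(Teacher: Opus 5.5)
Your proposal is correct and follows essentially the same route the paper indicates (deferring to \cite{Degen24}). You lift an MRD code of minimum rank distance $2$ to obtain a constant dimension code of dimension $r$ with $d_S\ge 4$ and at least $q^{(n-r)(r-1)}$ elements, then note that every subset yields a distinct sparse paving $q$-matroid of rank $r$ via the construction of \cite{GJ22}. Your separate treatment of the empty subset as the uniform $q$-matroid, and your direct injectivity check from the rank function, close the small bookkeeping gaps cleanly.
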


We can slightly improve this lower bound on $s_q(r,n)$ as described below.

\begin{proposition}\label{prop:lowerboundnumpaving}
    Let $n \geq 4$ and $2 \leq r \leq \left \lfloor \frac{n}{2} \right \rfloor$. Then 
    $$q^{3(n-r)}2^{q^{(n-r)(r-1)}} \leq s_q(r,n).$$
\end{proposition}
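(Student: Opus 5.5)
Every subset of a CDC with $d_S\geq 4$ is again such a CDC and gives a distinct sparse paving $q$-matroid. The original bound $2^{q^{(n-r)(r-1)}}$ comes from taking all subsets of one lifted MRD code $\C_0=\cI(\C)$ of size $q^{(n-r)(r-1)}$, with $\C\subseteq \F_q^{r\times(n-r)}$ an MRD code of minimum rank distance $2$. To gain the factor $q^{3(n-r)}$, I will use many translates of $\C_0$ under $GL_n(\F_q)$. Each translate $g\C_0$ is again a CDC of dimension $r$ and minimum distance $4$, so every subset of $g\C_0$ yields a sparse paving $q$-matroid of rank $r$ on $\F_q^n$. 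The task is to choose a family $g_1,\dots,g_N$ of translates with $N\geq q^{3(n-r)}$ so that the subset families barely overlap.

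\textbf{Step 1 (choosing the translates).} The lifting places all elements of $\C_0$ in the "big cell": every $V\in\C_0$ is a complement of $W_0=\langle e_{r+1},\dots,e_n\rangle$. I will pick translates that each contain a subspace not complementary to the others' reference spaces, so that distinct translates are distinguished by their large subsets.

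A cleaner variant is to use translates of the underlying MRD code by matrices. For $A\in\F_q^{r\times(n-r)}$, the coset $\C+A$ is still MRD with the same distance. The cosets partition $\F_q^{r\times(n-r)}$ into $q^{r(n-r)}/|\C|=q^{n-r}$ pieces. Taking all subsets of each coset gives $q^{n-r}$ families. Two distinct cosets are disjoint, so a subset lies in two families only if it is empty. This yields at least $q^{n-r}(2^{|\C|}-1)+1$ distinct matroids.

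To reach exponent $3(n-r)$, I will combine this with two further independent sources of $q^{n-r}$ disjoint choices each:
\begin{enumerate}
\item cosets of $\C$, giving $q^{n-r}$ choices;
\item a different reference complement $W$, i.e.\ lifting relative to a different coordinate decomposition obtained by a transvection moving $W_0$ to another $(n-r)$-space;
\item possibly dual or row-operation variants.
\end{enumerate}
These should be arranged so that the resulting $q^{3(n-r)}$ codes $\C_1,\dots,\C_N$ are pairwise disjoint as sets of subspaces.

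\textbf{Step 2 (counting).} The key point is that if the $\C_i$ are pairwise disjoint, then nonempty subsets of different $\C_i$ are distinct. This gives
$$s_q(r,n)\geq 1+N\,(2^{q^{(n-r)(r-1)}}-1)\geq q^{3(n-r)}2^{q^{(n-r)(r-1)}}\cdot\tfrac12,$$
which falls short by a factor of $2$. To remove it, I will not require disjointness but only that the codes pairwise intersect in at most half of their elements. Subsets of $\C_i$ containing more than half of $\C_i$ then determine $i$, which gives about $N\cdot 2^{|\C|-1}$ matroids, still with a factor of $2$ lost. The cleanest fix is to use $N\geq 2q^{3(n-r)}$ disjoint codes, or to add the slack from the exact count. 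In any case $2^{q^{(n-r)(r-1)}}$ is vastly larger than the constants involved, so a small surplus in $N$ absorbs them.

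\textbf{Main obstacle.} The hard step is constructing enough pairwise disjoint (or nearly disjoint) lifted MRD codes, at least $q^{3(n-r)}$ or slightly more. Cosets alone give only $q^{n-r}$. My first attempt will be to vary the pivot structure: lifted codes relative to different coordinate complements, say the $\qbinom{n}{r}_q\approx q^{r(n-r)}$ choices of $W$ with $r\geq 2$. Each $r$-space avoids many of these $W$, so the disjointness fails. I will therefore select a sub-collection of complements $W$, large enough to supply the remaining factor $q^{2(n-r)}$ together with the cosets, whose big cells can be made disjoint after intersecting with suitable cosets. Checking that the count of such complements reaches $q^{2(n-r)}$ is the delicate point.
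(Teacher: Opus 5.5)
There is a genuine gap, at exactly the step you call the main obstacle. The family of at least $q^{3(n-r)}$ codes is never constructed, and neither route you sketch can produce it.

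\emph{Pairwise disjoint codes.} This is impossible by counting. Since $\qbinom{n}{r}_q<4q^{r(n-r)}$, at most $\qbinom{n}{r}_q/q^{(n-r)(r-1)}<4q^{n-r}$ pairwise disjoint sets of $q^{(n-r)(r-1)}$ $r$-spaces exist. So the $q^{n-r}$ lifted cosets are already within a factor $4$ of the maximum. The two further ``independent sources'' of $q^{n-r}$ disjoint choices cannot exist, because $n-r\geq 2$ gives $q^{2(n-r)}\geq 16$.

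\emph{Nearly disjoint codes.} This fallback is unproved, and it already fails for $(q,n,r)=(2,4,2)$. Every set of four pairwise skew lines of $PG(3,2)$ lies in a unique spread: three of them sweep out a hyperbolic quadric, which has exactly two external lines, and these are skew. Two such $4$-sets inside one spread share three lines. Hence at most $56$ of them, one per spread, pairwise meet in at most two lines, whereas you would need $2q^{3(n-r)}=128$.

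\emph{The factor $2$.} Your bookkeeping here is off. Since $1+N(2^s-1)=N2^s-(N-1)$, the shared empty set costs only an additive $N-1$, which a single extra matroid repairs. It is your ``more than half'' device that loses a genuine factor $2$.

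For comparison, the paper's proof is exactly your coset construction, but it asserts $q^{3(n-r)}$ pairwise disjoint lifted cosets of size $q^{(n-r)(r-1)}$. Your count $q^{n-r}$ is the correct one. The number $q^{3(n-r)}$ counts cosets of an MRD code of minimum \emph{rank} distance $4$, whose lifts have only $q^{(n-r)(r-3)}$ elements. Subspace distance $4$ requires rank distance $2$. For $(2,4,2)$ there are $4$ cosets, and $64$ disjoint $4$-sets cannot fit among $35$ lines. So the paper's argument, read correctly, yields only $q^{n-r}\bigl(2^{q^{(n-r)(r-1)}}-1\bigr)+1$, and it cannot be used to close your gap.

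The inequality itself is not refuted by this. In $J_2(4,2)$ the partial spreads number $1+35+280+560+280+56=1212\geq 1024$, so $s_2(2,4)=1212$. But a proof for general parameters needs a genuinely different count of independent sets of $J_q(n,r)$. Possible sources are the union of many heavily overlapping families $2^{g\C_0}$, or sets that mix a lifted code with $r$-spaces outside it. Neither your proposal nor the paper supplies such a count.
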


\begin{proof}
    Let $\C \leq \F_q^{r \times n-r}$ be an MRD code of minimum distance $4$. Note that any coset of $\C$ is also an MRD code of minimum distance $4$. Hence $\cI(\C')$, where $\C'$ is a coset of $\C$, is a subspace code of minimum distance $2$. Since $\cI$ is injective, $\cI(\C') \cap \cI(\C'') = \emptyset$ for any different coset $\C'$, $\C''$ of $\C$. Therefore there exist $q^{3(n-r)}$ independent sets of the $q$-Jonhson graph of cardinality $q^{(n-r)(r-1)}$. As each subset of the independent set induces a distinct $q$-matroid of rank $r$ over $\F_q^n$, the result follows.  
\end{proof}

\medskip

\section{Upper bounds}\label{sec:upperbpunds}

In this section, we derive upper bounds on the number of $q$-matroids with fixed rank and fixed ground space. We first derive a bound on the number of paving $q$-matroids through a study of their dependent hyperplanes. We then derive a bound of the number of $q$-matroids by using an entropy counting method. 

\subsection{Paving $q$-matroids}

We will denote the collection of dependent hyperplanes of a $q$-matroid $M$ by
$$\cH^*_M:=\cH_M\setminus \cI_M.$$

\begin{lemma} Let $M=(E,\rho)$ be a paving $q$-matroid. 
Then $$\cH_M=\cH^*_M\cup\left\{V\leq \qbinom{E}{r-1}_q: V\not\leq H\text{ for all }H\in \cH^*_M\right\}.$$
It follows that each $M\in \cP_q(r,n)$ is uniquely determined by $\cH^*_M$.
\end{lemma}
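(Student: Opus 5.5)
We prove the two inclusions directly from the paving property, with rank $r=\rho(E)$; the trivial cases $r=0$ and $r=1$ are handled separately. The key observation is that in a paving $q$-matroid every subspace $V$ with $\dim V\le r-1$ is independent, since a dependent $V$ would contain a circuit of dimension at most $\dim V<r$. Consequently, for any subspace $W$ we have $\rho(W)\ge\min\{\dim W,r-1\}$. In particular, a hyperplane $H$ (rank $r-1$) has $\dim H\ge r-1$, and $H$ is independent exactly when $\dim H=r-1$. So
\[
\cH_M=\cH^*_M\cup\left(\cH_M\cap\qbinom{E}{r-1}_q\right),
\]
and the task is to identify which $(r-1)$-dimensional spaces are hyperplanes.

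For the inclusion $\subseteq$, let $V\in\cH_M$ with $\dim V=r-1$. If $V\le H$ for some $H\in\cH^*_M$ with $H\neq V$, this contradicts closedness: distinct flats of equal rank cannot be nested. Indeed, $V\le H$ with $\rho(V)=\rho(H)$ gives $H\le\cl(V)=V$. Moreover $H\neq V$ automatically, since $H$ is dependent and $V$ is independent. Hence $V$ lies in no dependent hyperplane, as required.

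For the inclusion $\supseteq$, dependent hyperplanes are hyperplanes by definition. So let $V$ have dimension $r-1$ and lie in no $H\in\cH^*_M$. Then $\rho(V)=r-1$, so $\cl(V)$ is a flat of rank $r-1$, i.e.\ a hyperplane containing $V$. If $\cl(V)\ne V$, then $\dim\cl(V)>r-1=\rho(\cl(V))$, so $\cl(V)$ is a dependent hyperplane containing $V$, which is a contradiction. Hence $V=\cl(V)\in\cH_M$.

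This step uses the standard facts that $\cl(V)$ is a subspace, is a flat, and satisfies $\rho(\cl(V))=\rho(V)$. These I would cite from the $q$-matroid literature (e.g.\ \cite{BCJ22,JP2018}) rather than reprove; this is the only mildly delicate point. For the uniqueness claim, the displayed formula expresses $\cH_M$ in terms of $\cH^*_M$ alone, once $r$ and $E=\F_q^n$ are fixed. Since a $q$-matroid is determined by its hyperplanes \cite{AB24,BCJ22}, $M\in\cP_q(r,n)$ is determined by $\cH^*_M$.
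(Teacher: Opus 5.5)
Your proof is correct and follows essentially the same route as the paper. The $\subseteq$ inclusion rests on the fact that nested hyperplanes of equal rank coincide, which you spell out via $H\le\cl(V)=V$; the $\supseteq$ inclusion observes that $\cl(V)\neq V$ would make $\cl(V)$ a dependent hyperplane containing $V$; and uniqueness follows because hyperplanes determine a $q$-matroid. The separate treatment of $r=0,1$ that you announce but never carry out is unnecessary, since your general argument already covers those cases.
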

\begin{proof}
    $\cH_M\subseteq\{\ldots\}$: If $V\in \cH_M\setminus\cH^*_M$, then $V$ is  an independent hyperplane and, by definition,
    $\dim(V)=\rho(V)=r-1$. If $H$ is any hyperplane so that $V\leq H$, then $V=H$ as $V$ and $H$ are both hyperplanes, but then $H=V\in\cI_M$ and hence $H\not\in \cH^*_M$.

    $\cH_M\supseteq\{\ldots\}$: Clearly $\cH_M\supseteq \cH^*_M$. Suppose that $V$ is an $(r-1)$-space. By our assumption that $M$ is paving, we have $\rho(V)=\dim(V)=r-1$. Then $H:=\cl(V)$ is a hyperplane of $M$ so that $V\leq H$. If $V$ is not a hyperplane, then $V\neq H$ and hence $\rho(H)=\rho(V)=\dim(V)<\dim(H)$. Then $H\in \cH^*_M$.

    Finally, each $q$-matroid is uniquely determined by its ground space and collection of hyperplanes. If $M\in \cP_q(r,n)$, then the ground space of $M$ is $E=\F_q^n$ and the rank of $M$ is $r$, and by the above equation, the set of hyperplanes  $\cH_M$ is determined by $E, r$, and the dependent hyperplanes $\cH^*_M$.
\end{proof}

The dependent hyperplanes of a paving $q$-matroid are precisely its cyclic flats excluding $\{0,E\}$. Although this fact will not be used in the remainder of the paper, we still believe it is worth stating and proving, given the importance of cyclic flats in the study of $q$-matroids (see, for example, \cite{AB24, GJ24decomp}).

\begin{proposition}\label{prop:cylicflatpav}

    If a $q$-matroid $M = (E, \rho)$ of rank $r$ is paving then 
    \begin{align*}\cZ_{M} \setminus \{0, E\} &= \{ \cl(C) \, : \, C \in \C_{M} \text{ and } \rho(C) = r-1\}= \cH^*_M
    \end{align*}
\end{proposition}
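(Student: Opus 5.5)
We prove the two equalities by a cycle of inclusions: first $\cZ_M\setminus\{0,E\}\subseteq \cH^*_M$, then $\cH^*_M\subseteq\{\cl(C): C\in\C_M,\ \rho(C)=r-1\}$, and finally $\{\cl(C): C\in \C_M,\ \rho(C)=r-1\}\subseteq \cZ_M\setminus\{0,E\}$. Here $\C_M$ denotes the circuits of $M$. Throughout we use that, since $M$ is paving, every circuit has dimension $r$ or $r+1$. A circuit $C$ satisfies $\rho(C)=\dim C-1$, and $\rho(C)\le r$. So a circuit of rank $r-1$ has dimension $r$, and every subspace of dimension at most $r-1$ is independent.

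\emph{First inclusion.} Let $Z$ be a cyclic flat with $Z\neq 0,E$. Since $Z\neq 0$ is open, it contains a circuit $C$, so $\dim Z\geq \dim C\ge r$ and $Z$ is dependent. Since $Z$ is a flat different from $E$, we have $\rho(Z)\le r-1$; otherwise $\rho(Z)=r=\rho(E)$ would force $\cl(Z)=E$. On the other hand $C\le Z$ gives $\rho(Z)\ge\rho(C)=\dim C-1\geq r-1$. Hence $\rho(Z)=r-1$, so $Z$ is a hyperplane, and it is dependent, so $Z\in\cH^*_M$.

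\emph{Second inclusion.} Let $H\in\cH^*_M$. Then $\rho(H)=r-1<\dim H$, so $\dim H\ge r$. Choose an $(r-1)$-subspace $V\le H$; it is independent, hence $\rho(V)=r-1=\rho(H)$, and therefore $\cl(V)=H$ because $H$ is a flat of the same rank containing $V$. Pick $w\in H\setminus V$ and set $C'=V+\langle w\rangle$. Then $\rho(C')=r-1<r=\dim C'$, so $C'$ is dependent, while all its proper subspaces have dimension at most $r-1$ and are therefore independent. Thus $C'$ is a circuit of rank $r-1$. Since $V\le C'\le H$ and all three have rank $r-1$, we get $\cl(C')=\cl(V)=H$.

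\emph{Third inclusion, and the main obstacle.} Let $C$ be a circuit with $\rho(C)=r-1$ and put $F=\cl(C)$. Then $F$ is a flat of rank $r-1$, hence $F\neq E$, and $F\neq 0$. The work is to show that $F$ is open, i.e.\ a sum of circuits. The plan is to show that every vector $x\in F$ lies in some circuit contained in $F$.
\begin{itemize}
\item If $x\in C$, the circuit $C$ itself does.
\item If $x\in F\setminus C$, choose an $(r-2)$-subspace $W\le C$; this is possible since $\dim C=r$. Then $U=W+\langle x\rangle$ has dimension $r-1$, so it is independent. Take $y\in C\setminus U$, which exists because $\dim U=r-1<r=\dim C$, and set $D=U+\langle y\rangle\le F$. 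Since $\dim D=r$ and $\rho(D)\le \rho(F)=r-1$, $D$ is dependent, and its proper subspaces have dimension at most $r-1$ and are independent. So $D$ is a circuit containing $x$.
\end{itemize}
Hence $F$ is the sum of the circuits it contains, so $F$ is a cyclic flat different from $0$ and $E$, closing the cycle of inclusions. The only delicate point is this openness argument: one must use the paving property, namely that all $(r-1)$-spaces are independent, to guarantee that every $r$-subspace of $F$ is a circuit.
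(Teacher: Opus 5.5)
Your proof is correct, but it handles the hard inclusion differently from the paper.

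The paper proves the two equalities separately:
\begin{itemize}
\item For a cyclic flat $Z$, it shows $Z=\cl(C)$ for any circuit $C\le Z$ by a rank squeeze.
\item For $H\in\cH^*_M$, it shows $H=\cl(C)$ for an arbitrary circuit $C\le H$, with pavingness forcing $\rho(C)=r-1$.
\item For the remaining inclusion, that $\cl(C)$ is a cyclic flat, it simply cites \cite[Lemma 4.1 b)]{GJ24decomp}. That is a general fact about $q$-matroids, not tied to pavingness.
\end{itemize}

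You instead close a cycle of three inclusions, so $\cl(C)\in\cH^*_M$ comes for free. You construct the circuit inside $H$ explicitly. You prove openness of $\cl(C)$ directly from pavingness: every $r$-subspace of a rank-$(r-1)$ flat is dependent and all its proper subspaces are independent, so it is a circuit. This gives a self-contained, elementary proof. The cost is that your openness argument only works in the paving setting, whereas the cited lemma holds for all $q$-matroids.

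One small wrinkle: for $r=1$ (and every rank-$1$ $q$-matroid is paving), the $(r-2)$-subspace $W$ in your second bullet does not exist. Your last sentence already contains the fix. Since $\dim F\ge\dim C=r$, every $x\in F$ lies in some $r$-subspace of $F$, and that subspace is a circuit. So you can drop the $W$/$y$ construction altogether.
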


\begin{proof}
    Let $Z$ be a cyclic flat of $M$ that is neither $\cl(0)$ nor $E$. Since $Z \neq E$ then $\rho(Z) < \rho(E) = r$. Furthermore, $Z \neq 0$ hence, by definition, it is a non-trivial sum of circuits. Therefore, there exists a circuit $C \le  Z$. Since $M$ is paving, $\rho(C) \geq r-1$. Combining the previous facts, we get $r-1 \leq \rho(C) \leq \rho(Z) \leq r-1$, hence equality holds throughout. Finally since $Z$ is a flat both containing $C$ and of same rank as $C$ it must be its closure. The reverse inclusion follows from \cite[Lemma 4.1 b)]{GJ24decomp}.

   For the second equality. Let $C \in \C_M$ such that $\rho(C) = r-1$. Then $\cl(C)$ is a flat of rank $r-1$, hence a hyperplane, as well as dependent (since it contains a circuit). For the reverse inclusion, let $H \in \cH^*_M$. Since $H$ is a dependent hyperplane, $\rho(H) = r-1$ and it must contain a circuit, $C$. Furthermore, $M$ is paving thus $r-1 \leq \rho(C)$. By monotonicity of the rank function $r-1 \leq \rho(C) \leq \rho(H) \leq r-1$, which implies equality throughout. Since $H$ is a flat containing $C$ and of same rank as $C$, it must be that $\cl(C) = H$.
\end{proof}


\begin{remark}
 Note that in a paving $q$-matroid the space $\{0\}$ is always a cyclic flat. However, one can show, the groundspace $E$ is a cyclic flat if and only if $\sum_{H \in \cH^*_M} H = E$. Since cyclic flats together with their rank values fully determine a $q$-matroid, this line of reasoning can also be used to show that dependent hyperplanes fully determine the $q$-matroid. 
\end{remark}

We will next describe how the set of dependent hyperplanes $\cH^*$ of a paving matroid of rank $r$ may be reconstructed from a sufficiently rich collection of dependent $r$-spaces
$$\cV\subseteq \bigcup_{H\in \cH^*}\qbinom{H}{r}.$$
To estimate the number of paving $q$-matroids, we will argue that such a  collection $\cV$ need not be large, and that hence there are no more paving $q$-matroids than sets of $r$-spaces of bounded size.

The following lemma is key.
\begin{lemma}\label{lem:deprspaceclos}
    Let $M$ be a paving $q$-matroid of rank $r$. If $V$ and $W$ are spaces of rank $r-1$ in $M$ and $\dim(V \cap W) = r-1$, then $\cl(V) = \cl(W)$.
\end{lemma}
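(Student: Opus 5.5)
Our plan is to show that $V$ and $W$ lie in a common flat of rank $r-1$. Both closures are then equal to that flat, hence to each other. The tools are submodularity (R3), the paving hypothesis, and the elementary properties of closure: $\cl(X)$ is the unique flat of rank $\rho(X)$ containing $X$, and $\rho(\cl(X))=\rho(X)$.

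\emph{Step 1: the intersection is independent.} Put $I:=V\cap W$, so $\dim(I)=r-1$. In a paving $q$-matroid of rank $r$ every circuit has dimension at least $r$. Hence every subspace of dimension at most $r-1$ contains no circuit and is independent. In particular $\rho(I)=r-1$.

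\emph{Step 2: the sum has rank $r-1$.} Apply (R3) to $V$ and $W$:
$$\rho(V+W)\leq \rho(V)+\rho(W)-\rho(V\cap W)=(r-1)+(r-1)-(r-1)=r-1.$$
Monotonicity (R2) gives $\rho(V+W)\geq\rho(V)=r-1$, so $\rho(V+W)=r-1$.

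\emph{Step 3: conclude.} Since $V\leq V+W$ and the two have the same rank, every vector of $V+W$ lies in $\cl(V)$. To see this, take $w\in V+W$. Then $\rho(V)\leq\rho(V+\langle w\rangle)\leq\rho(V+W)=\rho(V)$, so $w\in\cl(V)$. Thus $V+W\leq\cl(V)$, and by the symmetric argument $V+W\leq\cl(W)$.

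Now $\cl(V)$ is a flat of rank $r-1$ containing $V+W$, which also has rank $r-1$. Hence $\cl(V)=\cl(V+W)$, and likewise $\cl(W)=\cl(V+W)$. If needed, this uses the standard $q$-matroid facts that $\cl$ is monotone and idempotent, with $\rho(\cl(X))=\rho(X)$. Therefore $\cl(V)=\cl(W)$.

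The only delicate point is the one in Step 3: that $\cl(X)$ is a subspace with $\rho(\cl(X))=\rho(X)$, and that it is the unique maximal superspace of $X$ of the same rank. We take these as the standard closure properties of $q$-matroids (cf.\ \cite{JP2018, BCJ22}). The main content of the lemma is Steps 1–2, where the paving hypothesis forces $\rho(V\cap W)=r-1$ so that submodularity pins down $\rho(V+W)$.
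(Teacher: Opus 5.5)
Your proof is correct, but it pivots on $V+W$, whereas the paper pivots on $V\cap W$.

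The paper's argument stops right after your Step 1. Once the paving hypothesis gives $\rho(V\cap W)=r-1=\rho(V)=\rho(W)$, it applies the closure fact you already use in Step 3 (if $X\le Y$ and $\rho(X)=\rho(Y)$, then $\cl(X)=\cl(Y)$). Applied to the pairs $V\cap W\le V$ and $V\cap W\le W$, this gives $\cl(V)=\cl(V\cap W)=\cl(W)$ directly.

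Your route needs an extra application of submodularity (Step 2) to get $\rho(V+W)=r-1$, and then the same closure fact twice. This is harmless but unnecessary. The extra information it produces, $V+W\le\cl(V)$, also follows from the conclusion, since $V,W\le\cl(V)=\cl(W)$.

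Your closing remark should therefore be adjusted: the essential input is only Step 1, the paving hypothesis. Submodularity enters both arguments only through the standard closure properties you cite.
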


\begin{proof}
Since $M$ is paving, $V\cap W$ is independent and hence $\rho(V) =\rho(W) = r-1 = \rho(V \cap W)$. Then $\cl(V) = \cl(V \cap W) = \cl(W)$.
\end{proof}
We will construct $\cV$ by selecting a suitable subset of $r$-sets for each dependent hyperplane $H$.
\begin{lemma} \label{lem:H_encoding}Let $H$ be a linear space of dimension $t$ and let $r\leq t$. Then there exist a collection
$$\cV(H)=\{V_0, \ldots, V_{t-r}\}\subseteq \qbinom{H}{r}$$
so that $\dim(V_{i-1}\cap V_{i})=r-1$ for $i=1,\ldots, t-r$ and $H=V_0+\cdots +V_{t-r}$.
\end{lemma}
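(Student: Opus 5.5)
I will prove the statement by an explicit construction in coordinates, using a basis of $H$. Choose a basis $e_1,\ldots,e_t$ of $H$; the case $t=r$ is trivial, taking $\cV(H)=\{H\}$. For $i=0,\ldots,t-r$ define
$$V_i:=\langle e_{i+1}, e_{i+2},\ldots, e_{i+r}\rangle,$$
a sliding window of $r$ consecutive basis vectors. Each $V_i$ is spanned by $r$ linearly independent vectors, so $\dim V_i=r$ and $V_i\in\qbinom{H}{r}$. The indices stay in range because $i+r\le t$.

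Next I check the intersection condition. For $1\le i\le t-r$, the spaces $V_{i-1}=\langle e_i,\ldots,e_{i+r-1}\rangle$ and $V_i=\langle e_{i+1},\ldots,e_{i+r}\rangle$ are both spanned by subsets of the fixed basis. For coordinate subspaces of a common basis, the intersection is the span of the common basis vectors. This can be seen by writing a vector of the intersection uniquely in the basis $e_1,\dots,e_t$ and noting that its support must lie in both index sets. Hence $V_{i-1}\cap V_i=\langle e_{i+1},\ldots,e_{i+r-1}\rangle$, which has dimension $r-1$. Equivalently, $\dim(V_{i-1}+V_i)=r+1$, since the sum is spanned by $e_i,\ldots,e_{i+r}$, and the dimension formula gives the same result.

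Finally, $V_0+\cdots+V_{t-r}$ contains every $e_j$ for $1\le j\le t$: take $i=\max(0,j-r)$, so that $i+1\le j\le i+r$. The sum is therefore all of $H$. The sets $V_i$ are pairwise distinct, since $V_i$ contains $e_{i+r}$ but $V_{i'}$ with $i'<i$ does not, so the collection indeed has $t-r+1$ elements. There is no real obstacle; the only point requiring care is the justification that intersections of coordinate subspaces are spanned by the common basis vectors.
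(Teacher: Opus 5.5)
Your proof is correct and is essentially the paper's argument made concrete. The paper fixes a tower $U_0\le U_1\le\cdots\le U_{t-r}=H$ with $\dim U_i=r+i$, chooses $V_0=U_0$ and $V_i\le U_i$ with $\dim(V_i\cap V_{i-1})=r-1$ and $V_i\not\le U_{i-1}$, and deduces $V_0+\cdots+V_i=U_i$ by induction; your sliding window is exactly such a choice with $U_i=\langle e_1,\ldots,e_{r+i}\rangle$, and the explicit coordinates simply make the existence of each $V_i$ (which the paper leaves implicit) and their pairwise distinctness immediate.
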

\begin{proof}
    Pick any tower of subspaces 
    $U_0\leq U_1\leq \cdots\leq U_{t-r}=H$ 
    so that $\dim(U_i)=r+i$. Let $V_0=U_0$, and for $i=1,\ldots, t-r$, let $V_i\leq U_i$ be any $r$-space so that $\dim(V_i\cap V_{i-1})=r-1$ and $V_i\not\leq U_{i-1}$. Then $\dim(U_{i-1}+V_i)>\dim(U_{i-1})$ and $U_{i-1}+V_i\leq U_i$, so that 
    $U_{i-1}+V_i=U_i$. By induction $V_0+\cdots+ V_i=U_i$ for $i=0,\ldots, t-r$, and hence $V_0+\cdots+ V_{t-r}=U_{t-r}=H$ as required.
\end{proof}
For each $M\in \cP_q(r,n)$, we put
$$\cV(M):=\bigcup\{\cV(H): H\in \cH^*_M\}$$
where each $\cV(H)$ is any set of $r$-spaces $\cV$ as in Lemma \ref{lem:H_encoding}.

\begin{lemma}\label{lem:rec}
    Let $M\in \cP_q(r,n)$, and consider the undirected graph
    $$G:=(\cV(M), X)$$
    with set of edges $X:=\{\{V, W\}: \dim(V\cap W)=r-1\}.$ Then 
    $$\cH^*_M=\left\{\sum_{V\in \cO} V: \cO\text{ a connected component of }G\right\}.$$
\end{lemma}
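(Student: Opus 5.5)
The plan is to show that the connected components of $G$ are exactly the sets $\cV(H)$ for $H\in\cH^*_M$. The statement then follows at once, because $\sum_{V\in\cV(H)}V=H$ by Lemma~\ref{lem:H_encoding}.

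First I will record the relevant ranks. Every $V\in\cV(M)$ lies in some $H\in\cH^*_M$, so $\rho(V)\le\rho(H)=r-1$. On the other hand, $V$ contains an $(r-1)$-space, which is independent because $M$ is paving, so $\rho(V)\ge r-1$. Hence $\rho(V)=r-1$. Moreover, $\cl(V)$ is a hyperplane containing $V$ and contained in the flat $H$, and $\rho(\cl(V))=r-1=\rho(H)$. Two nested flats of equal rank coincide, so $\cl(V)=H$ for the dependent hyperplane $H$ with $V\in\cV(H)$. In particular each $V\in\cV(M)$ lies in $\cV(H)$ for a unique $H$, namely $H=\cl(V)$, so the sets $\cV(H)$, $H\in\cH^*_M$, partition $\cV(M)$.

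Next I will show that each $\cV(H)$ is contained in a single component. This is immediate from Lemma~\ref{lem:H_encoding}: consecutive elements satisfy $\dim(V_{i-1}\cap V_i)=r-1$, so $V_0,V_1,\ldots,V_{t-r}$ is a path in $G$.

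Then I will show that no edge joins $\cV(H)$ and $\cV(H')$ for distinct $H,H'$. If $V\in\cV(H)$, $W\in\cV(H')$ and $\dim(V\cap W)=r-1$, then Lemma~\ref{lem:deprspaceclos} applies, since both spaces have rank $r-1$ by the first step. It gives $\cl(V)=\cl(W)$, that is, $H=H'$. Consequently the components of $G$ are exactly the sets $\cV(H)$, and summing over a component recovers $H$.

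The only delicate point is the identification $\cl(V)=H$, which needs the rank of $V$ to be exactly $r-1$ and $H$ to be a flat. Both are immediate from the paving property and the definition of a hyperplane, so I expect no real obstacle.
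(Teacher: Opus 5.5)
Your proposal is correct and follows the paper's proof. Both arguments identify the connected components of $G$ with the sets $\cV(H)$, $H\in\cH^*_M$, using the path structure from Lemma~\ref{lem:H_encoding} and Lemma~\ref{lem:deprspaceclos} to exclude edges between different $\cV(H)$. The only difference is cosmetic: you obtain $\cl(V)=H$ directly from nested flats of equal rank, whereas the paper extracts it from the chain $H\le\sum_{V\in\cO}V\le H_{\cO}=\cl(V_0)\le H$.
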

\begin{proof} By construction, each $V\in \cV(M)$ is a dependent $r$-space of $M$, and as $M$ is paving the rank of such $V$ is $r-1$.
If $\{V, W\}$ is an edge of $G$, then $\dim(V\cap W)=r-1$, so that then $\cl(V)=\cl(W)$ by Lemma \ref{lem:deprspaceclos}. 
It follows that for each connected component $\cO$ of $G$ there is a space $H_{\cO}$ so that $\cl(V)=H_{\cO}$ for each $V\in \cO$. 
As $V\leq \cl(V)=H_{\cO}$ for each $V\in \cO$, we have $\sum_{V\in \cO} V \leq H_{\cO}$. 

 Consider an $H\in \cH_M^*$. Then $\cV(H)=\{V_0, \ldots, V_{k}\}\subseteq \cV(M)$, where $\dim(V_{i-1}\cap V_{i})=r-1$ for $i=1,\ldots, k$ and $H=V_0+\cdots+V_{k}$. Then each $\{V_i, V_{i+1}\}$ is an edge of $G$. It follows that if  $\cO$ is a component of $G$ such that $\cV(H)\cap \cO\neq \emptyset$, then  $\cV(H)\subseteq \cO$, and hence 
    $$H=V_0+\cdots +V_k\leq \sum_{V\in \cO} V\leq H_{\cO}=\cl(V_0)\leq H$$
so that $H=H_{\cO}$. Then $\cO=\cV(H)$, for if $H'\in \cH_M^*$  has $\cV(H)\cap \cO\neq \emptyset$ as well, then $H'=H_{\cO}=H$. Hence
    $$\{\cV(H): H\in \cH^*_M\}=\{\cO: \cO\text{ a connected component of }G\}.$$
The lemma follows using that $H=\sum_{V\in \cV(H)} V$ for each $H\in \cH^*_M$.
\end{proof}

\begin{proposition}\label{prop:reconstructcyclic}
    Each $M\in \cP_q(r,n)$ is uniquely determined by $\cV(M)$.
\end{proposition}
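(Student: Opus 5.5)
The proof will be a composition of the two reconstruction results already established, so the plan is to show that the data $\cV(M)$, together with the fixed parameters $n$ and $r$, determines $\cH^*_M$. The first lemma of this subsection then recovers the whole $q$-matroid. Concretely, suppose $M, M'\in \cP_q(r,n)$ satisfy $\cV(M)=\cV(M')$. I will show that $M=M'$.

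First, I form the graph $G=(\cV(M),X)$ of Lemma~\ref{lem:rec}. Its edge set is
$$X=\{\{V,W\}:\dim(V\cap W)=r-1\}.$$
This edge set depends only on the vertex set $\cV(M)$ and on $r$; it does not depend on the rank function of $M$. Hence $\cV(M)=\cV(M')$ gives literally the same graph $G$ for $M$ and for $M'$, and therefore the same connected components. Applying Lemma~\ref{lem:rec} to both $q$-matroids, I get
$$\cH^*_M=\Big\{\sum_{V\in\cO}V:\cO \text{ a component of } G\Big\}=\cH^*_{M'}.$$

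Second, I invoke the earlier lemma stating that each $M\in\cP_q(r,n)$ is uniquely determined by $\cH^*_M$. Since both $M$ and $M'$ have ground space $\F_q^n$ and rank $r$, that lemma gives $\cH_M=\cH_{M'}$. A $q$-matroid is determined by its ground space and its hyperplanes, so $M=M'$.

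The only delicate point is the role of the choices made in defining $\cV(M)$: each $\cV(H)$ is only some collection as in Lemma~\ref{lem:H_encoding}, not a canonical one. This causes no trouble, because Lemma~\ref{lem:rec} holds for any admissible choice. Thus the map $M\mapsto\cV(M)$, for whatever choices are fixed, is injective. Beyond that, the argument has no real obstacle; it is a direct chaining of the two lemmas.
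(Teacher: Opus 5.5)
Your proposal is correct and is the paper's argument: the graph $G$ depends only on $\cV(M)$ and $r$, so Lemma~\ref{lem:rec} recovers $\cH^*_M$, and the first lemma of the subsection then recovers $M$. For that final step you cite the right result; the paper's proof cites Lemma~\ref{lem:H_encoding} at that point, which appears to be a mis-reference.
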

\begin{proof} Let $M\in \cP_q(r,n)$. In Lemma \ref{lem:rec}, the graph $G$ depends only on $\cV(M)$ and the rank $r$, and hence $\cH^*_M$ is determined by $r, \cV(M)$. By Lemma \ref{lem:H_encoding}, $M$ is uniquely determined by $\cH^*_M$.
\end{proof}

We next determine an upper bound on the cardinality of $\cV(M)$. 

\begin{lemma}\label{lem:vmupperbound}
   Let $M\in \cP_q(r,n)$. Then 
    $$|\cV(M)| \leq \frac{1}{[n-r+1]_q}\qbinom{n}{r}_q.$$
\end{lemma}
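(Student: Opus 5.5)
The plan is a double-counting argument based on the fact that distinct dependent hyperplanes of a paving $q$-matroid meet in small dimension. I assume $r\geq 2$. For $r=1$ the only possible dependent hyperplane is $\cl(0)$, and the statement would have to be checked separately.

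\textbf{Step 1: distinct dependent hyperplanes meet in dimension at most $r-2$.} Let $H\neq H'$ be in $\cH^*_M$. Then $H\cap H'$ is a flat properly contained in the hyperplane $H$, so $\rho(H\cap H')\leq r-2$. Since $M$ is paving, every subspace of dimension at most $r-1$ is independent. Hence $\dim(H\cap H')\leq r-2$; otherwise $H\cap H'$ would contain an independent $(r-1)$-space and have rank at least $r-1$. Consequently the families $\qbinom{H}{r-1}_q$, for $H\in\cH^*_M$, are pairwise disjoint subsets of $\qbinom{E}{r-1}_q$. Writing $t_H=\dim H$, this gives
$$\sum_{H\in\cH^*_M}\qbinom{t_H}{r-1}_q\leq\qbinom{n}{r-1}_q .$$

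\textbf{Step 2: rewrite the target bound.} From the identity $\qbinom{n}{r}_q[r]_q=\qbinom{n}{r-1}_q[n-r+1]_q$, the target equals $\qbinom{n}{r-1}_q/[r]_q$. Since $|\cV(M)|\leq\sum_{H}(t_H-r+1)$ by Lemma~\ref{lem:H_encoding}, it suffices to prove, for every $t\geq r$ (note $t_H\geq r$ because $H$ is dependent),
$$(t-r+1)\,[r]_q\leq\qbinom{t}{r-1}_q, \quad\text{equivalently}\quad (t-r+1)\,[t-r+1]_q\leq\qbinom{t}{r}_q .$$
Summing this over $H\in\cH^*_M$ and applying Step 1 then yields the lemma.

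\textbf{Step 3: the numerical inequality.} This is the main, if routine, obstacle. Put $s=t-r$.
\begin{itemize}
\item For $s=0$ both sides equal $1$.
\item For $s=1$ we need $2(q+1)\leq[r+1]_q$. This holds for $r\geq 2$, since $[r+1]_q\geq q^2+q+1$.
\item For $s\geq 2$ I would use $\qbinom{t}{r}_q\geq q^{rs}\geq q^{2s}$ together with $[s+1]_q<2q^{s}$. This reduces the claim to $2(s+1)\leq q^{s}$, which holds for $s\geq 3$ when $q=2$ and for $s\geq 2$ when $q\geq 3$.
\item The single leftover case $q=2$, $s=2$ needs $3\cdot 7\leq\qbinom{r+2}{2}_2$. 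This holds since $\qbinom{4}{2}_2=35$ and the value increases with $r$.
\end{itemize}
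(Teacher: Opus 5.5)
Your proof is correct for $r\geq 2$ and follows the paper's route. The paper uses the same disjointness of the families $\qbinom{H}{r-1}$ (via Lemma~\ref{lem:deprspaceclos}) and then bounds the resulting LP with the dual solution $y^*=1/[r]_q$. That is exactly your termwise inequality $(t-r+1)[r]_q\leq\qbinom{t}{r-1}_q$; the paper only asserts that the maximum is attained at $t=r$, whereas you verify it.

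Your exclusion of $r=1$ is essential, not merely cautious. There the inequality reads $t\leq 1$, and the lemma as stated fails. For $n\geq 3$, any rank-$1$ $q$-matroid on $\F_q^n$ whose loop space $L$ has dimension $2$ is paving. It has $\cH^*_M=\{L\}$ and $|\cV(M)|=2>1=\frac{1}{[n]_q}\qbinom{n}{1}_q$.
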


\begin{proof} Let $h_t := |\{H \in \cH^*_M \, : \dim(H) = t\}|$. For each 
    $H \in \cH^*_{M}$ of dimension $t=\dim(H)$ we have $|\cV(H)| = t -r +1$ as $\cV(H)$ is chosen according to Lemma \ref{lem:H_encoding}.
    Hence
    $$|\cV(M)|\leq  \sum_{H\in \cH^*_M}|\cV(H)|= \sum_{t=r}^{n-1} (t-r+1)h_t.$$
    By Lemma \ref{lem:deprspaceclos}, each $(r-1)$-subspace of $E(M)$ is contained in at most one $H\in \cH^*_{M} $. Hence 
    $$\sum_{t=r}^{n-1} \qbinom{t}{r-1}_q h_t = \sum_{H\in \cH^*_M} \left|\qbinom{H}{r-1}\right|\leq \left|\qbinom{E(M)}{r-1}\right|=\qbinom{n}{r-1}_q.$$
    It follows that $|\cV(M)|$ is at most the optimal value of the LP
    \begin{align*}\max ~~&\sum_{t=r}^{n-1} (t-r+1)h_t \\ &\sum_{t=r}^{n-1} \qbinom{t}{r-1}_qh_t \leq \qbinom{n}{r-1}_q\\
    &h_t \geq 0 \text{ for } t=r, \ldots,  n-1.
    \end{align*}
By LP duality, this equals
    \begin{align*}\min ~~&y\qbinom{n}{r-1}_q\\
    &y\qbinom{t}{r-1}_q \geq t-r+1 &\text{ for } t=r, \ldots,  n-1\\
    &y\geq 0.
    \end{align*}
The dual optimal solution then is
$$y^*= \max\left\{(t-r+1)/\qbinom{t}{r-1}_q:t=r, \ldots,  n-1\right\} = 1/\qbinom{r}{r-1}_q$$ as the maximum is attained when $t = r$. Then the optimal value of the dual LP is $$y^*\qbinom{n}{r-1}_q=\frac{1}{[r]_q}\qbinom{n}{r-1}_q = \frac{1}{[n-r+1]_q}\qbinom{n}{r}_q,$$
as required.
\end{proof}

We can now derive an upper bound on the number of paving $q$-matroids.

\begin{theorem}\label{thm:nbrpaving}
    For all $0\leq r\leq n$, we have
    $$\log(p_q(r,n)) \leq \frac{1}{[n-r+1]_q}\qbinom{n}{r} \log(e[n-r+1]_q).$$
\end{theorem}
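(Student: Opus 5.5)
The plan is an encoding argument. By Proposition \ref{prop:reconstructcyclic}, the map $M\mapsto \cV(M)$ from $\cP_q(r,n)$ to subsets of $\qbinom{\F_q^n}{r}$ is injective. By Lemma \ref{lem:vmupperbound}, every image has cardinality at most
$$k:=\frac{1}{[n-r+1]_q}\qbinom{n}{r}_q.$$
Writing $N:=\qbinom{n}{r}_q$, we therefore get
$$p_q(r,n)\leq \sum_{i\leq k}\binom{N}{i}.$$
This reduces the theorem to a standard estimate on partial sums of binomial coefficients.

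Next, I use the well-known bound $\sum_{i\leq k}\binom{N}{i}\leq (eN/k)^k$, valid for $1\leq k\leq N$. One proof: for $x=k/N\le 1$,
$$\sum_{i\leq k}\binom{N}{i}x^{i-k}\leq x^{-k}(1+x)^N\leq x^{-k}e^{xN}=(eN/k)^k.$$
Since $N/k=[n-r+1]_q$, this gives
$$\log p_q(r,n)\leq k\log(e[n-r+1]_q),$$
which is exactly the statement. The base of the logarithm does not matter, because both sides scale in the same way.

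There are a few technical points.

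First, $k$ need not be an integer. The cardinality bound then reads $|\cV(M)|\leq\lfloor k\rfloor$. For real $k\geq 1$, the function $k\mapsto k\log(eN/k)$ is nondecreasing on $(0,N]$, since its derivative is $\log(N/k)\geq 0$. Hence the bound at $\lfloor k\rfloor$ implies the bound at $k$ whenever $\lfloor k\rfloor\geq1$.

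Second, the degenerate cases must be handled directly. If $\lfloor k\rfloor=0$, or if $r=0$ or $r=n$ (where $\cV(M)$ is empty or trivial), then only the empty set is possible. In that case $p_q(r,n)\le 1$, so the left side is at most $0$ while the right side is nonnegative.

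Third, Lemma \ref{lem:vmupperbound} implicitly requires $r\geq1$ for the counts of $(r-1)$-spaces. For $r=0$ there is a unique $q$-matroid, so the bound holds trivially.

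I expect no real obstacle. The main care point is this bookkeeping: non-integer $k$ and the boundary ranks $r\in\{0,n\}$. The other point is ensuring that $k\le N$ for the binomial estimate, which holds since $[n-r+1]_q\geq1$.
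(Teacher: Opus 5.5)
Your proposal is correct and follows the paper's proof: you use the injectivity of $M\mapsto\cV(M)$ (Proposition~\ref{prop:reconstructcyclic}), the cardinality bound of Lemma~\ref{lem:vmupperbound}, and the estimate $\sum_{i\le k}\binom{N}{i}\le (eN/k)^k$ with $N/k=[n-r+1]_q$. Your extra bookkeeping for non-integer $k$ and the boundary ranks $r\in\{0,n\}$ is sound and fills in details the paper leaves implicit; note that your generating-function proof of the binomial estimate already works for any real $0<k\le N$, so the monotonicity step is optional.
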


\begin{proof}
By Proposition \ref{prop:reconstructcyclic}, each $M\in \cP_q(r,n)$ is uniquely determined by the collection $\cV(M)\subseteq \qbinom{E(M)}{r}$, which by Lemma \ref {lem:vmupperbound} has cardinality at most 
$$\frac{1}{[n-r+1]_q}\qbinom{n}{r} =: t.$$
Using the standard bound $\sum_{i=0}^t \binom{s}{i} \leq (\frac{e \cdot s}{t} )^t$, we obtain
\begin{align*}
    p_q(r,n) \leq \sum_{i=0}^t \binom{\qbinom{n}{r}_q}{i} \leq (e[n-r+1]_q)^{t}.
\end{align*}
The statement then follows. 
\end{proof}

\subsection{An entropy counting method} 
In \cite{BPvdP2014}, an entropy counting technique was used to give an upper bound on the number of matroids of rank $r$ on a fixed ground set of cardinality $n$. We will develop the analogous technique for $q$-matroids in this section.

Our method will apply to any collection of $q$-matroids $\cX\subseteq \bcM_q$ which is {\em closed under contraction} in the following technical sense: for each $u, n\in \NN$ so that $0\leq u\leq n$ and  each subspace $U\leq \F_q^n$ of dimension $u$, there exists an isomorphism $\phi: \F_q^n/U\rightarrow \F_q^{n-u}$ so that $$\phi(M/U)\in \cX(n-u)\text{ for all }M\in \cX(n).$$ 
Note that $\cS_q$, $\cP_q$ and $\cM_q$ are closed under contraction in this sense.

As noted in the preliminaries, a $q$-matroid $M=(E,\rho)$ is uniquely determined by its ground space $E$ and set of bases $\mathcal{B}_{M}$. Indeed, we have
$$\rho(V)=\max\{\dim(B\cap V): B\in \cB_M\} \qquad\text{ for all }V\leq E.$$
As $E(M)=\F_q^n$ for each $M\in \cX(r,n)$, we have 
$$x(r,n):=|\cX(r,n)|=|\{\mathcal{B}_{M} : M \in \cX(r,n)\}|.$$
Given a subspace $U \leq \F_q^n$ of dimension $u$, the contraction minor $M/U$  is a $q$-matroid over $\F_q^n/U$. It is not difficult to verify that 
$$\cB_M/U:=\{B/U: B\in \mathcal{B}_{M}, U\leq B\}= \left\{\begin{array}{ll} \cB_{M/U}&\text{if }U\in \cI_M\\ 
\emptyset &\text{otherwise}\end{array}\right.$$
We will argue that if $\{\cB_M/U: M\in \cX(r,n)\}$ is a small set for each subspace $U$ of a fixed dimension, then 
$\cX(r,n)$ itself is relatively small. To this end, we will use the following result from \cite{CGFS1986}, which is an application of Shearer's Entropy Lemma.
\begin{theorem}[Shearer's Product Theorem]  Let $A$ be a finite set and let $A_1, \ldots A_m\subseteq A$ be such that each $a\in A$ occurs in at least $k$ of the $A_i$. Let $\mathcal{S}\subseteq 2^A$. Then
$$|\mathcal{S}|^k\leq \prod_{i=1}^m |\mathcal{S}_i|,$$
where $\mathcal{S}_i:=\{S\cap A_i: S\in\mathcal S\}$ for $i=1,\ldots,m$. 
\end{theorem}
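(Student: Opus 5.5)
We prove the statement by the entropy method: encode a uniformly random member of $\mathcal{S}$ as a random vector, and compare its entropy with the entropies of its projections onto the sets $A_i$.

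\emph{Setup.} Write $A=\{a_1,\ldots,a_N\}$ in a fixed order. Let $S$ be chosen uniformly at random from $\mathcal{S}$, and let $X=(X_1,\ldots,X_N)$ be its indicator vector, so $X_j=1$ if and only if $a_j\in S$. For $B\subseteq A$ write $X_B:=(X_j)_{a_j\in B}$. Since $S\mapsto X$ is injective and $S$ is uniform, $H(X)=\log|\mathcal{S}|$. The vector $X_{A_i}$ is exactly the indicator vector of $S\cap A_i$, so it takes at most $|\mathcal{S}_i|$ values. The uniform distribution maximises entropy, hence $H(X_{A_i})\leq \log|\mathcal{S}_i|$. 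It therefore suffices to prove Shearer's inequality
$$k\,H(X)\leq \sum_{i=1}^m H(X_{A_i}),$$
and then exponentiate to obtain $|\mathcal{S}|^k\leq\prod_i|\mathcal{S}_i|$.

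\emph{Chain rule for the whole vector.} By the chain rule,
$$H(X)=\sum_{j=1}^N H(X_j\mid X_1,\ldots,X_{j-1}).$$

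\emph{Chain rule for each projection.} Fix $i$. Apply the chain rule to $X_{A_i}$, listing the coordinates of $A_i$ in the inherited order:
$$H(X_{A_i})=\sum_{a_j\in A_i} H\bigl(X_j\mid X_l: l<j,\ a_l\in A_i\bigr).$$
Conditioning on more variables cannot increase entropy. The set $\{X_l: l<j,\ a_l\in A_i\}$ is contained in $\{X_1,\ldots,X_{j-1}\}$, so each term satisfies
$$H\bigl(X_j\mid X_l: l<j,\ a_l\in A_i\bigr)\ \geq\ H(X_j\mid X_1,\ldots,X_{j-1}).$$
Thus $H(X_{A_i})\geq\sum_{a_j\in A_i}H(X_j\mid X_1,\ldots,X_{j-1})$.

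\emph{Summation.} Sum this inequality over $i=1,\ldots,m$. Each index $j$ appears in at least $k$ of the sets $A_i$, and every conditional entropy is nonnegative. Hence
$$\sum_i H(X_{A_i})\geq k\sum_j H(X_j\mid X_1,\ldots,X_{j-1})=k\,H(X),$$
which is Shearer's inequality.

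The only step needing care is the comparison of conditional entropies in the per-projection chain rule. It relies on fixing one global order on $A$ and restricting that same order to each $A_i$, so that every conditioning set for $X_{A_i}$ is a subset of the corresponding global prefix $\{X_1,\ldots,X_{j-1}\}$. The remaining steps are standard facts about entropy.
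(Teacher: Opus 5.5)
Your proof is correct. The paper gives no proof of this statement: it cites \cite{CGFS1986} and describes the result as an application of Shearer's Entropy Lemma. Your argument follows exactly that route, namely the chain-rule proof of the entropy inequality $k\,H(X)\leq\sum_i H(X_{A_i})$ followed by the uniform-distribution step. The one case you leave unaddressed is $\mathcal{S}=\emptyset$, where no uniform distribution exists; it is trivial, since $|\mathcal{S}|^k=0$ for $k\geq 1$.
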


\begin{theorem}\label{thm:entropybound}Let $\cX\subseteq \cM_q$ be a contraction-closed class of $q$-matroids and let $u,r,n\in \NN$ be such that $0\leq u\leq r\leq n$. Then
$$\log(x(r,n)+1)/\begin{bmatrix} n \\ r \end{bmatrix}_q\leq \log(x(r-u,n-u)+1)/\begin{bmatrix} n-u \\ r-u \end{bmatrix}_q.$$
\end{theorem}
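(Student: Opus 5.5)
We apply Shearer's Product Theorem with ground set $A:=\qbinom{\F_q^n}{r}$, so $|A|=\qbinom{n}{r}_q$. The index family is $A_U:=\{B\in A: U\leq B\}$, one for each $U\in\qbinom{\F_q^n}{u}$. Each $r$-space $B$ contains exactly $\qbinom{r}{u}_q$ subspaces of dimension $u$, so it lies in exactly $k:=\qbinom{r}{u}_q$ of the $A_U$. The number of sets is $m=\qbinom{n}{u}_q$.

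For the family $\mathcal S$ we take the sets $\cB_M$ with $M\in\cX(r,n)$, together with the empty set. The reason for adding $\emptyset$ is to explain the ``$+1$'' in the statement. Since $\cB_M\neq\emptyset$ for every $q$-matroid, we get $|\mathcal S|=x(r,n)+1$. Note that $\mathcal S_U=\{\cB_M\cap A_U\}\cup\{\emptyset\}$, and $\cB_M\cap A_U$ is in bijection with $\cB_M/U$ via $B\mapsto B/U$, because $B$ is recovered as the preimage of $B/U$ under $\F_q^n\to\F_q^n/U$.

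We now bound each $|\mathcal S_U|$. By the displayed identity before Shearer's theorem, $\cB_M/U$ is either $\emptyset$ or $\cB_{M/U}$. Fix the isomorphism $\phi_U:\F_q^n/U\to\F_q^{n-u}$ given by contraction-closedness. Then $M\mapsto\phi_U(M/U)$ sends every $M\in\cX(r,n)$ with $U$ independent into $\cX(r-u,n-u)$, since $\rho(M/U)=r-u$ in that case. The basis family is determined by the contracted matroid, and $\phi_U$ is a bijection. Hence the nonempty members of $\mathcal S_U$ inject into $\cX(r-u,n-u)$, and
$$|\mathcal S_U|\leq x(r-u,n-u)+1.$$
Shearer's theorem then gives
$$(x(r,n)+1)^{\qbinom{r}{u}_q}\leq (x(r-u,n-u)+1)^{\qbinom{n}{u}_q}.$$

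Taking logarithms, we divide by $\qbinom{r}{u}_q\qbinom{n}{r}_q$ and use the $q$-identity
$$\qbinom{n}{u}_q\qbinom{n-u}{r-u}_q=\qbinom{n}{r}_q\qbinom{r}{u}_q,$$
which counts pairs $U\leq B$ with $\dim U=u$ and $\dim B=r$. This turns the exponent ratio $\qbinom{n}{u}_q/\qbinom{r}{u}_q$ into $\qbinom{n}{r}_q/\qbinom{n-u}{r-u}_q$, which is exactly the claimed inequality.

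The main point to check carefully is the identification of $\mathcal S_U$ with a subset of $\cX(r-u,n-u)\cup\{\emptyset\}$. This relies on three facts. First, $B\mapsto B/U$ is injective on spaces containing $U$. Second, the image is exactly $\cB_{M/U}$. Third, contraction-closedness gives one fixed $\phi_U$ per $U$, which is needed so that distinct restricted families map to distinct elements of $\cX(n-u)$. Everything else is routine double counting. The degenerate case $u=0$ is trivial, since the two sides coincide.
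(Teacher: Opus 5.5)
Your proposal is correct and follows the paper's proof essentially step for step. It uses the same Shearer setup ($A$ the $r$-spaces, $A_U$ those containing a fixed $u$-space $U$, and $\mathcal{S}$ the basis families together with $\emptyset$). It obtains the same bound $|\mathcal{S}_U|\leq x(r-u,n-u)+1$ via $\cB_M/U=\cB_{M/U}$ and the fixed isomorphism $\phi_U$, and it applies the same flag-counting identity to the exponents; your explicit check that $B\mapsto B/U$ is injective on spaces containing $U$ only spells out a step the paper leaves implicit.
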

\proof 
Let $E:=\F_q^n$ be the common ground space of the $q$-matroids in $\cX(r,n)$. Fix an enumeration  of the $u$-subspaces of $E$, say 
$$\{U_1,\ldots, U_m\}=\begin{bmatrix} E \\ u \end{bmatrix}_q, \qquad \text{where }m=\begin{bmatrix} n \\u  \end{bmatrix}_q.$$ 
Let $$A:=\begin{bmatrix} E \\ r \end{bmatrix}_q \text{ and }A_i:= \left\{B\in A: U_i\leq B\right\}\text{ for }i=1,\ldots, m.$$
Then for each $B\in A$, we have $$\{i: B\in A_i\}=\{i: U_i\leq B\}=\begin{bmatrix} r \\u  \end{bmatrix}_q=: k$$
as each $r$-subspace of $E$ contains exactly $k$ $u$-subspaces of $E$. Let 
$$\mathcal{S}:=\{\mathcal{B}_M: M\in \cX(r,n)\}\cup\{\emptyset\}.$$
Then $\mathcal{S}\subseteq 2^A$ and $ |\mathcal{S}|=x(r,n)+1$. We have 
$$\mathcal{S}_i:= \{S\cap A_i: S\in\mathcal S\}=\{\{B\in \mathcal{B}_M: U_i\leq B\}: M\in \cX(r,n)\}\cup\{\emptyset\}.$$
It follows that 
$$|\mathcal{S}_i|\leq |\{\mathcal{B}_{M/U_i}: M\in \cX(r,n), U_i\in \cI_M\}\cup\{\emptyset\}|$$
and as each $\mathcal{B}_{M/U_i}$ determines $M/U_i$, we have $$|\mathcal{S}_i|\leq |\{M/U_i: M\in \cX(r,n), U_i\in \cI_M\}\cup\{\emptyset\}|.$$ As $\cX$ is closed under contraction, there exists an 
isomorphism $$\phi_i:\F_q^n/U_i\rightarrow \F_q^{n-u}$$ for $i=1,\ldots,m$ so that $\phi_i(M/U_i)\in \cX(n-u)$ for all $M\in \cX(n)$. Being an isomorphism, each such $\phi_i$ induces a bijection between the $q$-matroids on $\F_q^n/U_i$ and the $q$-matroids on $\F_q^{n-u}$. Then
 $$|\mathcal{S}_i|\leq |\{\phi_i(M/U_i): M\in \cX(r,n), U_i\in \cI_M\}\cup\{\emptyset\}|.$$
Moreover, we have $\phi_i(M/U_i)\in \cX(r-u, n-u)$ for each $M\in \cX(r,n)$ so that $U_i\in \cI_M$, as $\dim(U_i)=u$. Hence
 $$|\mathcal{S}_i|\leq |\cX(r-u,n-u)\cup \{\emptyset\}|=x(r-u,n-u)+1.$$ 
By Shearers' Product Theorem, it follows that 
$$(x(r,n)+1)^k=|\mathcal{S}|^k\leq \prod_{i=1}^m |\mathcal{S}_i| \leq  x(r-u,n-u)+1)^m,$$
so that
$$\log(x(r,n)+1)\cdot\begin{bmatrix} r \\ u \end{bmatrix}_q\leq \log(x(r-u,n-u)+1)\cdot\begin{bmatrix} n \\ u \end{bmatrix}_q.$$
Using that $$\begin{bmatrix} n \\ r \end{bmatrix}_q/\begin{bmatrix} n-u \\ r-u \end{bmatrix}_q= \begin{bmatrix} n \\ u \end{bmatrix}_q/\begin{bmatrix} r \\ u \end{bmatrix}_q$$
the theorem follows.
\endproof

\begin{corollary} \label{coro:upperbound}
For all $1\leq r\leq n$, we have $$\log(s_q(r,n)+1)\leq \frac{1}{[n-r+1]_q}\qbinom{n}{r}_q \log\left([ n-r+1]_q+1\right).$$\end{corollary}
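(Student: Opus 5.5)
We apply Theorem \ref{thm:entropybound} to the class $\cX=\cS_q$ of sparse paving $q$-matroids, which is closed under contraction as noted before that theorem. We take $u=r-1$, so that $r-u=1$ and $n-u=n-r+1$. The theorem then gives
$$\log(s_q(r,n)+1)\leq \qbinom{n}{r}_q\,\frac{\log(s_q(1,n-r+1)+1)}{\qbinom{n-r+1}{1}_q}=\frac{1}{[n-r+1]_q}\qbinom{n}{r}_q\log\bigl(s_q(1,n-r+1)+1\bigr),$$
using $\qbinom{m}{1}_q=[m]_q$. The hypothesis $0\leq u\leq r\leq n$ holds because $r\geq 1$. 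It therefore suffices to prove that $s_q(1,m)\leq [m]_q$ for $m=n-r+1\geq 1$.

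The key step is to count the sparse paving $q$-matroids of rank $1$ on $\F_q^m$. A rank-$1$ $q$-matroid is always paving, since every circuit has dimension at least $1=r$. Sparse paving additionally requires that any two circuits of dimension $r=1$ intersect in a space of dimension at most $r-2=-1$. That is impossible for distinct circuits, so there is at most one circuit of dimension $1$, that is, at most one loop direction.

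I would also check the circuits of dimension $2$ to make sure a uniform-type structure is forced. The matroids then split as follows.
\begin{itemize}
\item If there are no loops, every $1$-space has rank $1$. Every space then has rank $\min\{\dim,1\}$ and the matroid is the uniform $U_{1,m}$.
\item If there is exactly one loop $\langle \ell\rangle$, then $\rho(V)=0$ iff $V\leq\langle \ell\rangle$. Such a loop can be chosen in $[m]_q$ ways.
\end{itemize}
This gives $s_q(1,m)\leq [m]_q+1$. The main obstacle is the off-by-one: we need $s_q(1,m)+1\leq [m]_q+1$, i.e.\ $s_q(1,m)\leq [m]_q$.

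I expect this to be resolved by the definition of sparse paving. A loop is a circuit of dimension $1=r$, and a single loop vacuously satisfies the intersection condition, so the real issue is how the convention treats it. The cleanest fix is to show that the configurations with a loop cannot all be sparse paving. For instance, the loop $\langle\ell\rangle$ together with a $2$-space $\langle \ell,w\rangle$ whose rank is $1$ yields further circuits. More plausibly, the intended count is $s_q(1,m)=[m]_q+1$ and the estimate absorbs this. In that case I would instead apply Theorem \ref{thm:entropybound} with $\cX(1,m)$ bounded by $[m]_q+1$ and note that $\log([m]_q+2)$ versus $\log([m]_q+1)$ is the stated form up to the ``$+1$'' convention.

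If needed, I would avoid the issue altogether with $u=r-2$ and $r-u=2$, or else handle rank $1$ directly via duality. My primary plan, though, is $u=r-1$ together with the rank-$1$ count. The final write-up must pin down the exact definition so that the rank-$1$ count gives at most $[m]_q$ matroids, or equivalently $s_q(1,m)+1\leq [m]_q+1$.
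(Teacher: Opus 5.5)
Your reduction is the same as the paper's: apply Theorem~\ref{thm:entropybound} with $\cX=\cS_q$ and $u=r-1$, then count rank-$1$ sparse paving $q$-matroids. Your count, $s_q(1,m)=[m]_q+1$ for $m\geq 2$, agrees with the paper's. The off-by-one you flag is genuine, and the paper's proof makes exactly this slip. Substituting $s_q(1,t)=[t]_q+1$ into the theorem gives the factor $\log(s_q(1,t)+1)=\log([t]_q+2)$, not $\log([t]_q+1)$. The gap cannot be closed, because the inequality as stated is false for $r=1$ and $n\geq 2$. There the left-hand side is $\log(s_q(1,n)+1)=\log([n]_q+2)$, while the right-hand side is $\frac{1}{[n]_q}\qbinom{n}{1}_q\log([n]_q+1)=\log([n]_q+1)$.

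So none of your fallback ideas can work, and the first one is mistaken. A single loop $\langle\ell\rangle$ does give a sparse paving $q$-matroid, as you yourself observed, because the intersection condition only concerns pairs of distinct circuits of dimension $r=1$. A $2$-space $\langle \ell,w\rangle$ contains the dependent space $\langle\ell\rangle$, so it is not a circuit, and $2$-dimensional circuits play no role in the condition anyway. Saying the estimate ``absorbs'' the discrepancy is not an argument, since at $r=1$ there is no slack at all. Switching to $u=r-2$ is not available when $r=1$.

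What your route actually proves is one of two corrected bounds:
\begin{itemize}
\item the bound with $\log([n-r+1]_q+2)$ on the right;
\item the bound $\log s_q(r,n)\leq\frac{1}{[n-r+1]_q}\qbinom{n}{r}_q\log([n-r+1]_q+1)$, obtained by running Shearer's theorem with $\mathcal{S}=\{\cB_M: M\in\cS_q(r,n)\}$ without adjoining $\emptyset$.
\end{itemize}
The second is legitimate because in a paving $q$-matroid of rank $r$ every $(r-1)$-space $U_i$ is independent. Hence each member $\{B\in\cB_M: U_i\leq B\}$ of $\mathcal{S}_i$ corresponds bijectively, via $B\mapsto B/U_i$, to $\cB_{M/U_i}$. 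Here $M/U_i$ is sparse paving of rank $1$ on a space of dimension $n-r+1$, so $|\mathcal{S}_i|\leq s_q(1,n-r+1)\leq [n-r+1]_q+1$. Your write-up should state and prove one of these corrected forms rather than the stated inequality.
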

\proof A sparse paving $q$-matroid of rank 1 over $\F_q^t$ is determined by its flat of rank 0, which is a subspace of dimension at most 1. Hence 
$$s_q(1,t)=\begin{bmatrix} t \\ 1 \end{bmatrix}_q+\begin{bmatrix} t \\ 0 \end{bmatrix}_q=\begin{bmatrix} t\end{bmatrix}_q+1.$$ 
Taking $t=n-r+1$, and applying the theorem with $\cX=\mathcal{S}_q$ and $u=r-1$ yields the corollary.\endproof

The following corollary does not improve upon Theorem \ref{thm:nbrpaving}, but its proof demonstrates that an upper bound for rank 2 yields the same general bound via Theorem \ref{thm:entropybound}.  
\begin{corollary} \label{coro:upperbound2}
For all $2\leq r\leq n$, we have $$\log(p_q(r,n)+1)\leq \frac{1}{[n-r+1]_q}\qbinom{n}{r}_q \log\left(e[ n-r+1]_q\right).$$\end{corollary}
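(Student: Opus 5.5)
We apply Theorem \ref{thm:entropybound} with $\cX=\cP_q$, which is closed under contraction, and $u=r-2$. This gives
$$\log(p_q(r,n)+1)\leq \qbinom{n}{r}_q\,\frac{\log(p_q(2,t)+1)}{\qbinom{t}{2}_q},\qquad t:=n-r+2.$$
It therefore suffices to prove the rank-2 estimate
$$\log(p_q(2,t)+1)\leq \frac{1}{[t-1]_q}\qbinom{t}{2}_q\log\left(e[t-1]_q\right)\qquad\text{for all }t\geq 2.$$
Substituting $t=n-r+2$, so that $[t-1]_q=[n-r+1]_q$, then yields the corollary.

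For the rank-2 estimate we reuse the machinery of the previous subsection in the case $r=2$. By Proposition \ref{prop:reconstructcyclic}, each $M\in\cP_q(2,t)$ is determined by a collection $\cV(M)$ of $2$-spaces. By Lemma \ref{lem:vmupperbound}, this collection has size at most
$$s:=\frac{1}{[t-1]_q}\qbinom{t}{2}_q.$$
Hence $p_q(2,t)\leq\sum_{i=0}^{\lfloor s\rfloor}\binom{N}{i}$ with $N=\qbinom{t}{2}_q$, which is essentially the computation in the proof of Theorem \ref{thm:nbrpaving} with $r=2$.

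The main obstacle is the ``$+1$''. The bound $\sum_{i\le s}\binom{N}{i}\le (eN/s)^s$ controls $p_q(2,t)$ but not $p_q(2,t)+1$. I would absorb it using slack in this bound. Note that $\sum_{i=0}^{s}\binom{N}{i}\le \sum_i \binom{N}{i}(s/N)^{i-s}=(s/N)^{-s}(1+s/N)^N$. Also $(1+s/N)^N< e^s$ strictly once $s>0$, so $p_q(2,t)\le (eN/s)^s-\delta$ with a positive gap $\delta$.

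I would then check that this gap is at least $1$. This reduces to showing that $e^s-(1+s/N)^N\ge (s/N)^s$, which holds comfortably since $s\ge 1$ and $N/s=[t-1]_q\ge 1$. The case $t=2$ needs separate treatment. There $s=N=1$, and $p_q(2,2)=1$ satisfies $\log 2\le \log e$ directly.

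A cleaner alternative is to use a crude direct count of rank-2 paving $q$-matroids. Their dependent hyperplanes form a partition-like family of subspaces in which every point lies in at most one member. I would fall back on this if the slack estimate proves fiddly.
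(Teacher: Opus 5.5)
Your proof is correct, and its skeleton is the paper's: Theorem \ref{thm:entropybound} with $\cX=\cP_q$ and $u=r-2$, then the rank-$2$ encoding $M\mapsto\cV(M)$ with Lemma \ref{lem:vmupperbound}. The only difference is how you absorb the ``$+1$''.

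The paper does it combinatorially. The injection $M\mapsto\cV(M)$ into collections of at most $k=[t]_q/[2]_q$ two-spaces is not surjective. The chain $\cV(E)$ from Lemma \ref{lem:H_encoding} has $t-1\le k$ elements, but it cannot equal any $\cV(M)$: by Lemma \ref{lem:rec} it would force $E\in\cH^*_M$, and $E$ is never a hyperplane. This gives $p_q(2,t)+1\le\sum_{i\le k}\binom{N}{i}$ at once, uniformly for all $t\ge 2$, with no estimates.

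You instead extract the $1$ from the analytic slack in
\[
\sum_{i\le s}\binom{N}{i}\le (N/s)^s(1+s/N)^N\le (eN/s)^s .
\]
This uses no structural input, but it costs a case split and a numerical check.

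Your stated justification, ``$s\ge1$ and $N/s\ge1$'', does not suffice. Both conditions hold at $t=2$, and there the needed inequality $e^s-(1+s/N)^N\ge(s/N)^s$ fails, since $e-2<1$; that is exactly why $t=2$ needs separate treatment. For $t\ge3$, what makes it work is $a:=[t-1]_q\ge 3$ and $s>2$. Use
\[
a\ln(1+1/a)\le 1-\tfrac{1}{2a}+\tfrac{1}{3a^2}
\qquad\text{and}\qquad
s/a=\frac{[t]_q}{[2]_q[t-1]_q}>\frac{q}{q+1}\ge\frac23 .
\]
The exponent $s\bigl(\frac{1}{2a}-\frac{1}{3a^2}\bigr)=\frac{s}{a}\bigl(\frac12-\frac{1}{3a}\bigr)$ is then at least $\frac23\cdot\frac{7}{18}=\frac{7}{27}$. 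Hence
\[
e^s-(1+1/a)^{as}\ge e^2\bigl(1-e^{-7/27}\bigr)>1>a^{-s}.
\]
With this line added, your argument is complete.
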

\proof By Proposition \ref{prop:reconstructcyclic}, a paving $q$-matroid $M$ of rank 2 over $E=\F_q^t$ is uniquely determined by a collection $\mathcal{V}(M)$ 2-subspaces  of $E$, and 
$$|\mathcal{V}(M)|\leq \frac{1}{[t-1]_q}\qbinom{t}{2}_q=:k$$
by Lemma \ref{lem:vmupperbound}. So each collection of 2-subspaces $\mathcal{V}$ of cardinality $\leq k$ yields at most one paving $q$-matroid, but not all such collections do: for example,  the collection of 2-subspaces $\cV$ as obtained from Lemma \ref{lem:H_encoding} for $H=E$ has $t-1\leq k$ elements, but there is no paving $q$-matroid $M$ so that $\cV(M)=\cV$ since the ground space $E$ is never a hyperplane.
Thus $$p_q(2,t)+1\leq \sum_{i=0}^k \binom{\qbinom{t}{2}_q}{i}\leq \left(e [t-1]_q\right)^k$$
Taking $t=n-r+2$, and applying the theorem with $\cX=\mathcal{P}_q$ and $u=r-2$ yields the corollary.\endproof


We will apply Theorem \ref{thm:entropybound} to bound the number of $q$-matroids along the same lines. To do so, we first bound the number of $q$-matroids of rank 2. We need the following technical lemma.

\begin{lemma}\label{lem:technical}
    Let $n \in \mathbb{N}$ such that $n \geq 2$,  and $1 \leq i \leq n-2$. Then 
    $$\qbinom{n}{i}_q(e[n-i-1]_q)^{\frac{[n-i]_q}{[2]_q}} \leq (e[n-1]_q)^{\frac{[n]_q}{[2]_q}}.$$
\end{lemma}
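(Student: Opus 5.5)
The plan is to take logarithms, use that the left-hand exponent is smaller, and show the resulting slack dominates $\log\qbinom{n}{i}_q$. Since $[n-i-1]_q\le[n-1]_q$, the left-hand side is at most $\qbinom{n}{i}_q(e[n-1]_q)^{[n-i]_q/[2]_q}$. It therefore suffices to prove
$$\log\qbinom{n}{i}_q\;\le\;\frac{[n]_q-[n-i]_q}{[2]_q}\,\log\bigl(e[n-1]_q\bigr)=\frac{q^{n-i}[i]_q}{q+1}\,\log\bigl(e[n-1]_q\bigr),$$
using $[n]_q-[n-i]_q=q^{n-i}[i]_q$. The hypothesis $1\le i\le n-2$ forces $n\ge 3$, so $[n-1]_q\ge q+1$ and both sides are strictly positive.

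Next I bound both sides by elementary expressions in $q$ and $n$.

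\emph{Left side.} I use the standard estimate $\qbinom{n}{i}_q\le q^{i(n-i)}\prod_{j\ge1}(1-q^{-j})^{-1}\le 4q^{i(n-i)}$, which gives $\log\qbinom{n}{i}_q\le i(n-i)\log q+\log 4$. Here $i(n-i)\le n^2/4$.

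\emph{Right side.} Since $[i]_q\ge q^{i-1}$, the coefficient is $q^{n-i}[i]_q/(q+1)\ge q^{n-1}/(q+1)\ge q^{n-2}/2$. Moreover $\log(e[n-1]_q)\ge 1+(n-2)\log q$.

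So it is enough to show
$$\frac{q^{n-1}}{q+1}\bigl(1+(n-2)\log q\bigr)\ \ge\ \frac{n^2}{4}\log q+\log 4 .$$
The left-hand side grows exponentially in $n$ while the right-hand side is quadratic, so this holds for all $n$ beyond a small threshold, uniformly in $q\ge2$. The left-hand side grows in $q$ at least like $q^{n-2}\log q$ against $\log q$ on the right, so increasing $q$ only helps once the $q=2$ case holds.

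The main obstacle is the small cases, roughly $n\le 5$ or so, where the crude replacements $i(n-i)\le n^2/4$ and the constant $4$ are too lossy. For $i=1$ I will argue directly: the target becomes $\log[n]_q\le \frac{q^{n-1}}{q+1}\log(e[n-1]_q)$. Using $[n]_q\le(q+1)[n-1]_q$ and $q^{n-1}/(q+1)\ge q^2/(q+1)$, this reduces to a one-variable inequality in $q$. For example, at $n=3$, $q=2$ it reads $\log 7\approx1.95\le\frac43(1+\log 3)\approx2.8$. For the remaining small pairs $(n,i)$ I will verify the inequality explicitly at $q=2$, using exact Gaussian binomials in place of the bound $4q^{i(n-i)}$. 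I then extend to all $q$ by monotonicity in $q$ of the difference of the two sides, which is where most of the care will be needed.
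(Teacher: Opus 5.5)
Your approach is correct, and it spends the available slack differently from the paper. The paper keeps both the base gap and the exponent gap. It bounds $(e[n-1]_q)^{[n]_q/[2]_q}/(e[n-i-1]_q)^{[n-i]_q/[2]_q}$ from below by $q^{(n-3)+(n-1+i)q^{n-3}+iq^{-i}}$ using $q^{k-1}\le[k]_q\le q^k$. It then compares this with the very loose estimate $\qbinom{n}{i}_q\le q^{n(i+1)-i^2}$, all in one chain with no case distinction. You instead discard the base gap via $[n-i-1]_q\le[n-1]_q$ and take all slack from $[n]_q-[n-i]_q=q^{n-i}[i]_q$. You also use the sharper bound $\qbinom{n}{i}_q\le 4q^{i(n-i)}$ and split off small cases.

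The uniform chain is shorter, but your version is the more reliable one. The paper's first step uses $[n-1]_q\ge q^{n-1}$, whereas only $[n-1]_q\ge q^{n-2}$ holds; at $n=3$, $i=1$, $q=3$ that step is actually false. Its final comparison $(n-3)+(n-1+i)q^{n-3}+iq^{-i}\ge n(i+1)-i^2$ becomes $3+1/q\ge 5$ at $n=3$, $i=1$. So treating $n=3$ separately, as you plan, is not cosmetic.

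You can also trim your case analysis. Your reduced inequality $\frac{q^{n-1}}{q+1}(1+(n-2)\log q)\ge\frac{n^2}{4}\log q+\log 4$ already holds for every $n\ge 4$ and $q\ge 2$. At $q=2$, $n=4$ it reads $6.36\ge 4.16$, the gap only grows with $n$, and the $q$-derivative of the difference is at least $\frac{1}{q}\bigl(\frac{(n-2)q^{n-1}}{q+1}-\frac{n^2}{4}\bigr)\ge 0$. It fails only at $n=3$, where necessarily $i=1$, and your direct $i=1$ argument handles that case for all $q$. So there are no further small pairs to check with exact Gaussian binomials, and no monotonicity in $q$ of the exact difference to establish.
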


\begin{proof}
    To show the desired inequality we use the following facts. For $k \in \mathbb N$ we have
    \begin{equation}\label{bd:qbracket}
        q^{k-1} \leq [k]_q \leq q^k.
    \end{equation}
    For $n \in \mathbb N$ and $0 \leq i \leq n$, we have
    \begin{equation}\label{bd:qbinom}
        \qbinom{n}{i}_q \leq q^{n(i+1)-i^2}.
    \end{equation}
 Furthermore, for all $q \geq 2$ and $i \in \mathbb N$ we have $q^i \geq i$. Then
    \begin{align*}
        \frac{(e[n-1]_q)^{\frac{[n]_q}{[2]_q}}}{(e[n-i-1]_q)^{\frac{[n-i]_q}{[2]_q}}} & \geq \frac{(eq^{n-1})^{\frac{[n]_q}{[2]_q}}}{(eq^{n-i-1})^{\frac{[n-i]_q}{[2]_q}}} \\
        &\geq e^{\frac{[n]_q-[n-i]_q}{[2]_q}} q^{(n-1)\frac{[n]_q-[n-i]_q}{[2]_q} } q^{i\frac{[n-i]_q}{[2]_q}} \\
        &\geq e^{q^{n-3}}q^{(n-1)q^{n-3}}q^{iq^{n-i-3}}\\
        &\geq e^{q^{n-3}}q^{(n-1+i)q^{n-3} + iq^{-i}}\\
        &\geq q^{(n-3) +(n-1+i)q^{n-3} + iq^{-i}} \\
        &\geq q^{n(i+1) - i^2}\\
       & \geq \qbinom{n}{i}_q,
    \end{align*}
    as required.
\end{proof}

\begin{lemma}\label{lem:pav2} Let $t\geq 2$. Then
    $m_q(2,t)\leq (t-1) (e[t-1]_q)^{\frac{[t]_q}{[2]_q}}$.
\end{lemma}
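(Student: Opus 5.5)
We reduce a rank-$2$ $q$-matroid to a paving one via its loop space, and then induct on $t$ using Lemma \ref{lem:technical}.

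Let $M=(\F_q^t,\rho)$ be a $q$-matroid of rank $2$ and let $L:=\cl(0)$ be its space of loops, of dimension $i$ with $0\le i\le t-2$ (we need $\rho=2$, so $i\le t-2$). Contraction by $L$ gives $M/L$, a $q$-matroid of rank $2$ on $\F_q^t/L\cong\F_q^{t-i}$ without loops. A rank-$2$ $q$-matroid without loops is paving: every $1$-space is independent, so every circuit has dimension at least $2=r$. Moreover, $M$ is uniquely determined by the pair $(L, M/L)$, since $\rho(V)=\rho_{M/L}((V+L)/L)$ for all $V$; this holds because $L$ consists of loops, so $\rho(V+L)=\rho(V)$ and $\rho(L)=0$. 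Choosing for each $L$ a fixed isomorphism $\F_q^t/L\to\F_q^{t-i}$, we obtain
$$m_q(2,t)\le \sum_{i=0}^{t-2}\qbinom{t}{i}_q\, p_q(2,t-i).$$

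Next, bound $p_q(2,s)$ using the argument in the proof of Corollary \ref{coro:upperbound2}, i.e.\ Proposition \ref{prop:reconstructcyclic} with Lemma \ref{lem:vmupperbound}:
$$p_q(2,s)\le \sum_{j\le k}\binom{\qbinom{s}{2}_q}{j}\le (e[s-1]_q)^{k},\qquad k=\frac{1}{[s-1]_q}\qbinom{s}{2}_q=\frac{[s]_q}{[2]_q}.$$
For $s=2$ this gives $(e)^{1}\ge 1=p_q(2,2)$, which is fine; for $s\ge 3$ the binomial-sum bound applies as stated.

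Substituting $s=t-i$, the $i=0$ term is at most $(e[t-1]_q)^{[t]_q/[2]_q}$. For $1\le i\le t-2$, Lemma \ref{lem:technical} (with $n=t$) gives
$$\qbinom{t}{i}_q (e[t-i-1]_q)^{[t-i]_q/[2]_q}\le (e[t-1]_q)^{[t]_q/[2]_q}.$$
Summing the $t-1$ terms $i=0,\dots,t-2$ yields the claimed bound $(t-1)(e[t-1]_q)^{[t]_q/[2]_q}$.

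The main point needing care is the reduction step: one must check that $M\mapsto(L,M/L)$ is injective and lands in rank-$2$ paving $q$-matroids on $\F_q^{t-i}$. In particular, the loops of $M$ form a subspace, because $\rho(\cl(0))=0$ by submodularity, so $L$ is a genuine subspace of dimension $i$. The remaining steps are direct substitutions of the earlier lemmas.
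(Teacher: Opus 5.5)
Your proof is correct and follows the paper's argument. You decompose $M$ via its loop space $L=\cl(0)$ and bound the number of pairs $(L,M/L)$ by $\sum_{i=0}^{t-2}\qbinom{t}{i}_q\,p_q(2,t-i)$. You then bound $p_q(2,s)\le (e[s-1]_q)^{[s]_q/[2]_q}$, which is exactly Theorem \ref{thm:nbrpaving} with $r=2$, and apply Lemma \ref{lem:technical} termwise; your separate handling of the $i=0$ term and your check that $M/L$ is loopless, hence paving, are slightly more careful than the paper's write-up.
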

\begin{proof}
    Consider $M=(E,\rho)\in \cM(2,t)$. Let $L\leq E$ be the loop space of $M$, that is, let  $L=\cl_M(\{0\})$ be the unique flat of rank 0 in $M$. Then $M$ is uniquely determined by the pair $(L, M/L)$, as $$\rho(V)=\rho_{M/L}(V/L)\qquad \text{ for all }V\leq E.$$ Moreover, $M/L$ is a paving $q$-matroid of size $\dim(E/L)=t-\dim(L)$. 
    For $i=0,\ldots, t-2$
    there are at most 
    $$\qbinom{t}{i}_qp_q(2, t-i)$$
    pairs $(L, M/L)$ so that $\dim(L)=i$. Ranging over $i=0, \ldots, t-2$, using Theorem \ref{thm:nbrpaving} to bound the number of paving $q$-matroids,  we find that 
    $$m_q(2,t)=\sum_{i=0}^{t-2}\qbinom{t}{i}_qp_q(2, t-i)\leq \sum_{i=0}^{t-2} \qbinom{t}{i}_q(e[t-i-1]_q)^{\frac{[t-i]_q}{[2]_q}}. $$
    Using Lemma \ref{lem:technical} to simplify each of the $t-1$ terms of the latter sum, the lemma follows.
\end{proof}

\begin{theorem}\label{thm:matupperbound}
    For all $2 \leq r \leq n$, we have 
 $$\log(m_q(r,n)+1)\leq \frac{1}{[n-r+1]_q}\qbinom{n}{r}_q\log(e[n-r+1]_q)(1+ \epsilon(r,n))$$
    where $\epsilon(r,n):=(\log(n-r+2)[2]_q)/([n-r+2]_q\log(e[n-r+1]_q))$. 
\end{theorem}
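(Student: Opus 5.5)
We apply Theorem \ref{thm:entropybound} with $\cX=\cM_q$ and $u=r-2$, and feed it the rank-$2$ estimate of Lemma \ref{lem:pav2}. The class $\cM_q$ is closed under contraction, and $0\leq r-2\leq r\leq n$, so Theorem \ref{thm:entropybound} gives
$$\log(m_q(r,n)+1)\leq \frac{\qbinom{n}{r}_q}{\qbinom{n-r+2}{2}_q}\,\log(m_q(2,n-r+2)+1).$$
Put $t:=n-r+2\geq 2$. The task is then to bound $\log(m_q(2,t)+1)$ and rewrite the prefactor.

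For the prefactor we use the identity $\qbinom{t}{2}_q=[t]_q[t-1]_q/[2]_q$, so
$$\frac{1}{\qbinom{t}{2}_q}=\frac{[2]_q}{[t]_q[t-1]_q}=\frac{1}{[n-r+1]_q}\cdot\frac{[2]_q}{[t]_q}.$$
For the rank-$2$ count, Lemma \ref{lem:pav2} gives $m_q(2,t)\leq (t-1)(e[t-1]_q)^{[t]_q/[2]_q}$. We absorb the $+1$ using $m_q(2,t)+1\leq t\,(e[t-1]_q)^{[t]_q/[2]_q}$, which holds because $(e[t-1]_q)^{[t]_q/[2]_q}\geq 1$. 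Taking logarithms,
$$\log(m_q(2,t)+1)\leq \log t+\frac{[t]_q}{[2]_q}\log(e[t-1]_q).$$

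Multiplying by $\frac{1}{[n-r+1]_q}\frac{[2]_q}{[t]_q}\qbinom{n}{r}_q$ gives
$$\log(m_q(r,n)+1)\leq \frac{1}{[n-r+1]_q}\qbinom{n}{r}_q\left(\log(e[n-r+1]_q)+\frac{[2]_q\log(n-r+2)}{[n-r+2]_q}\right).$$
Factoring out $\log(e[n-r+1]_q)$ yields exactly $1+\epsilon(r,n)$ with the stated $\epsilon$.

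Everything here is bookkeeping. The only points to check are that the Gaussian binomial identity turns the prefactor into precisely $\frac{1}{[n-r+1]_q}\qbinom{n}{r}_q$, and that the $+1$ is absorbed cleanly so that the error term carries $\log(n-r+2)$ and not $\log(n-r+1)$. The edge case $r=2$, where $u=0$, is trivially consistent with this argument.
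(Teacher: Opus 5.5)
Your proof is correct and is essentially the paper's own argument: apply Theorem~\ref{thm:entropybound} with $\cX=\cM_q$ and $u=r-2$, bound $\log(m_q(2,t)+1)\le \frac{[t]_q}{[2]_q}\log(e[t-1]_q)+\log t$ via Lemma~\ref{lem:pav2}, and substitute $t=n-r+2$. You also write out two steps the paper leaves implicit, namely the identity $\qbinom{t}{2}_q=[t]_q[t-1]_q/[2]_q$ and the absorption of the $+1$.
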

\begin{proof} 
Applying Theorem \ref{thm:entropybound} to $\cX=\cM_q$ with $u=r-2$, we obtain
$$\log(m_q(r,n)+1)\leq \frac{1}{\qbinom{n-r+2}{2}_q}\qbinom{n}{r}_q\log( m_q(2, n-r+2)+1).$$
Using Lemma \ref{lem:pav2}, we find that 
    \begin{align*}
        \log(m_q(2,t)+1) &\leq \log\left( (t-1) (e[t-1]_q)^{\frac{[t]_q}{[2]_q}} +1\right) \\
        &\leq \frac{[t]_q}{[2]_q}\log (e[t-1]_q) +\log(t).
         \end{align*}
    Substituting this bound on $\log(m_q(2,t)+1)$ with $t=n-r+2$, the theorem follows.
\end{proof}

\medskip

\section{Asymptotics}\label{sec:asymptoticcomparisson}

In this section we consider the asymptotic behaviour of the logarithmic lower and upper bounds obtained in this paper.
Let $$l_q(r,n):=3(n-r) + q^{(n-r)(r-1)}$$
and $$u_q(r,n):=\frac{1}{[n-r+1]_q}\qbinom{n}{r}_q\log(e[n-r+1]_q)(1+ \epsilon(r,n))$$
with $\epsilon(r,n):=([2]_q\log(n-r+2))/([n-r+2]_q\log(e[n-r+1]_q))$. 
Then by Proposition \ref{prop:lowerboundnumpaving} and Theorem \ref{thm:matupperbound}, we have
$$l_q(r,n)\leq \log(s_q(r,n))\leq \log(p_q(r,n))\leq \log(m_q(r,n))\leq u_q(r,n).$$

We write $f(t)\prec g(t)$ (resp. $f(t)\approx g(t)$) as $t\rightarrow\infty$ to denote that  
$$\lim_{t\rightarrow\infty} \frac{f(t)}{g(t)}=0\left(\text{resp.} \in \mathbb{R}_{>0}\right).$$
Then $\prec$ is a linear order on the nonnegative functions modulo the equivalence $\approx$.

Following a straightforward calculation, we find:
\begin{enumerate}
 \item If $2\leq r$ and $q$ is a prime power, then 
$$l_q(r,n)\approx q^{(n-r)(r-1)}\prec q^{(n-r)(r-1)}(n-r)\approx u_q(r,n)$$
 as $n\rightarrow \infty.$
 \item If $q$ is a prime power, then
$$l_q(r,2r)\approx q^{r(r-1)}\prec q^{r(r-1)}r\approx u_q(r, 2r)$$ as $r\rightarrow \infty$.
\item If $2\leq r\leq n$, then 
$$l_q(r,n)\approx q^{(n-r)(r-1)}\prec q^{(n-r)(r-1)}(n-r)\log(q)\approx u_q(r,n)$$
 as the prime powers $q\rightarrow \infty$. 
 \end{enumerate}
There is an asymptotic gap between our best upper and lower bound in all three regimes, which prompts the question which bound is closer to the asymptotic value of $\log(m_q(r,n))$.  

In the regime where $r$ is fixed and $n\rightarrow \infty$, there is a  lower bound on the number of (classical) sparse paving matroids which is close to the entropy upper bound on the number of matroids \cite{BPvdP2014, HPvdP2022}. We expect that a better lower bound on the number of sparse paving $q$-matroids can be derived along the same lines.
\begin{conjecture} For all $r\geq 2$ and prime powers $q$, we have
$$\log(s_q(r,n))\approx \log(p_q(r,n))\approx \log(m_q(r,n))\approx q^{(n-r)(r-1)}(n-r)$$
as $n\rightarrow \infty$.
\end{conjecture}
In the regime where $n=2r$ and $r\rightarrow \infty$, there is a small gap between the best upper and lower bounds on the number of sparse paving matroids, but a larger gap with the entropy upper bound on all matroids. It was nevertheless established in \cite{PvdP2016} that $\log(s(r,n))\approx\log(m(r,n))$. We believe that the methods of that paper should extend to $q$-matroids.
\begin{conjecture} For each prime power $q$, we have
$$q^{r(r-1)}\approx\log(s_q(r,2r))\approx\log(p_q(r,2r))\approx \log(m_q(r,2r))$$
as $r\rightarrow \infty$.
\end{conjecture}
There is no analogue in classical matroids for the asymptotics of $q\rightarrow\infty$.

\subsection*{Acknowledgements}
The authors are grateful to Aida Abiad for her stimulating and encouraging discussions about this project.
Benjamin Jany is supported by the Casimir Institute at the Eindhoven University of Technology.

\bibliographystyle{amsplain}
\bibliography{Bib}

\end{document}